\newtheorem{thm}{Theorem}[section]
\newtheorem{defi}[thm]{Definition}
\newtheorem{lem}[thm]{Lemma}
\newtheorem{prop}[thm]{Proposition}
\newtheorem{fact}[thm]{Fact}
\newtheorem{rem}[thm]{Remark}
\newtheorem{ex}[thm]{Example}
\newtheorem{cor}[thm]{Corollary}
\newtheorem{nota}[thm]{Notation}
\newtheoremstyle{mystyle}
    {3pt}
    {3pt}
    {\itshape}
    {}
    {\bfseries}
    {.}
    { }
    {\thmname{#1}\thmnumber{ #2}\thmnote{ (#3)}}
\theoremstyle{mystyle}
\newtheorem{ass}{Assumption}[]
\title{Counting orbits of certain infinitely generated non-sharp discontinuous groups 
for the anti-de Sitter space}
\author{Kazuki Kannaka\thanks{RIKEN Interdisciplinary Theoretical and    Mathematical Sciences (iTHEMS), 
Wako, Saitama 351-0198, Japan, E-mail adress: kazuki.kannaka@riken.jp}}
\date{}
\begin{document}
\maketitle
\begin{abstract}
Inspired by an example of Gu\'{e}ritaud-Kassel [Geom.\ Topol.\ 2017], 
we construct a family of infinitely generated discontinuous groups $\Gamma$ 
for the $3$-dimensional anti-de Sitter space $\mathrm{AdS}^{3}$. 
These groups are 
\textit{not necessarily sharp} (a kind of ``strong'' proper discontinuity condition
introduced by Kassel and Kobayashi [Adv.\ Math.\ 2016]),
and we give its criterion.
Moreover, we find upper and lower bounds
of the counting $N_{\Gamma}(R)$ of a $\Gamma$-orbit contained 
in a pseudo-ball $B(R)$ as the radius $R$ tends to infinity.
We then find a non-sharp discontinuous group $\Gamma$ 
for which there exist infinitely many $L^2$-eigenvalues 
of the Laplacian on the noncompact anti-de Sitter manifold 
$\Gamma\backslash\mathrm{AdS}^{3}$,
by applying the method established by Kassel-Kobayashi.
We also prove that 
for any increasing function $f$,
there exists a discontinuous group $\Gamma$ for $\mathrm{AdS}^{3}$
such that the counting $N_{\Gamma}(R)$ of 
a $\Gamma$-orbit is larger than $f(R)$
for a sufficiently large $R$.
\end{abstract}
\textbf{Keywords} Laplace-Beltrami operator, discrete spectrum, 
anti-de Sitter space, properly discontinuous action,
non-sharp action, counting problem.\\
\textbf{MSC2010} Primary~58J50; Secondary~53C50, 22E40.

\tableofcontents

\section{Introduction}
\subsection{Construction of \texorpdfstring{$\Gamma_{\nu}(a_{-},a_{+},r,R)$}{} 
and the counting}
\label{intro-count}
In this paper, we construct a family of discrete groups $\Gamma$ of
isometries of the $3$-dimensional anti-de Sitter space $\mathrm{AdS}^{3}$
such that 
\begin{itemize}
\item
$\Gamma$ act properly discontinuously on $\mathrm{AdS}^{3}$;
\item
the counting has an arbitrary growth rate at infinity,
\end{itemize}
generalizing an example of Gu\'{e}ritaud-Kassel \cite[Sect.\ 10.1]{GuKa17}.
By counting, we mean the number of points in a $\Gamma$-orbit contained in a compact
set called a pseudo-ball $B(R)$ of radius $R>0$.

In contrast to the Riemannian case, 
a discrete group of isometries of a pseudo-Riemannian manifold
such as $\mathrm{AdS}^{3}$
may act with non-closed orbits.
We recall some basic notions and facts.
A \textit{pseudo-Riemannian manifold} is a smooth manifold $X$
equipped with a smooth non-degenerate symmetric bilinear form of signature $(p,q)$.
It is \textit{Riemannian} if $q=0$ and \textit{Lorentzian} if $q=1$.
A discrete group $\Gamma$ of isometries of a pseudo-Riemannian manifold $X$
is called a \textit{discontinuous group for $X$} if $\Gamma$ acts on $X$ 
properly discontinuously and freely 
(we include freeness in the definition as in Kobayashi \cite[Def.\ 1.3]{Ko01}).
Then there are at most finite elements in any orbit of a discontinuous group $\Gamma$
contained in any compact subset of $X$, hence we can count them. 
A semisimple symmetric space $X=G/H$ is a typical example of a pseudo-Riemannian 
manifold, of which the isometry group is ``large''.
Kassel and Kobayashi proved in \cite{KaKob16} 
for a discontinuous group $\Gamma(\subset G)$ for an arbitrary 
semisimple symmetric space $G/H$
that the counting is 
at most of exponential growth if $\Gamma$ is \textit{sharp} (a notion for 
``strong'' proper discontinuity, see \cite[Def.\ 4.2]{KaKob16}).

The $3$-dimensional anti-de Sitter space $\mathrm{AdS}^{3}$ 
is the simplest example of a Lorentzian semisimple symmetric space 
that admits infinite discontinuous groups.
Let us recall the counting result of Kassel-Kobayashi \cite{KaKob16}
in this specific setting where $X=\mathrm{AdS}^{3}$
and $G=\mathrm{SL}(2,\mathbb{R})\times \mathrm{SL}(2,\mathbb{R})$.
They considered a compact subset 
$B(R)$ of $X$ called a \textit{pseudo-ball} of radius $R>0$,
of which the volume is of exponential growth as $R\to\infty$,
see Section \ref{psphere}.
They proved that if a discontinuous group $\Gamma\subset G$ is sharp, then the \textit{counting}
\[
N_{\Gamma}(x,R):=\#(\Gamma x\cap B(R)) \ \text{ for } x\in X \text{ and } R>0
\]
has at most an exponential growth uniformly on $x\in X$ (\cite[Lem.\ 4.6 (4)]{KaKob16}):
\begin{align}
\label{count}
\exists A>0,\ \exists a>0,\ \forall x\in X,\ \forall R>0,\ N_{\Gamma}(x,R) \leq Ae^{aR}.
\end{align}
In particular, one has
\begin{align}
\label{weak-count}
\exists a>0,\ \forall x\in X,\ \limsup_{R\to\infty}\frac{N_{\Gamma}(x,R)}{e^{aR}}<\infty.
\end{align}
Any finitely generated discontinuous group for $\mathrm{AdS}^{3}$
is sharp by the results of Kassel \cite[Thm.\ 0.2.13]{Ka09} and Gu\'{e}ritaud-Kassel \cite[Thm.\ 1.8]{GuKa17},
hence its counting always satisfies the exponential growth condition (\ref{count}).

On the other hand, the counting for a \textit{non-sharp} discontinuous
group has not been well-understood.
In this paper, we investigate what can happen about the asymptotic behavior for the
counting $N_{\Gamma}(x,R)$ when $\Gamma$ is non-sharp.
For this, we construct a family of subgroups 
$\Gamma_{\nu}\equiv\Gamma_{\nu}(a_{-},a_{+},r,R)$
of $\mathrm{SL}(2,\mathbb{R})\times \mathrm{SL}(2,\mathbb{R})$
for sufficiently large $\nu\in\mathbb{N}$ 
associated to quadruples $(a_{-},a_{+},r,R)$ of real-valued sequences 
in Section \ref{Gam},
and study how the properties of $\Gamma_{\nu}$ depend on the data $(a_{-},a_{+},r,R)$.
For instance, we find a necessary and sufficient condition for the quadruple $(a_{-},a_{+},r,R)$
that $\Gamma_{\nu}$ is a discontinuous group 
for $\mathrm{AdS}^{3}$ in Proposition \ref{Gamdisc}.
Moreover we determine when 
the $\Gamma_{\nu}$-action on $\mathrm{AdS}^{3}$ is sharp 
in Proposition \ref{non-sharp}.
With these criteria, we present various non-sharp discontinuous groups 
for which different  phenomena happen about
the counting by choosing appropriate data $(a_{-},a_{+},r,R)$:
\begin{thm}
\label{countingabove-intro}
There exists a non-sharp discontinuous group $\Gamma$ for $\mathrm{AdS}^{3}$
such that for any $x\in\mathrm{AdS}^{3}$ and any $R>0$,
the counting $N_{\Gamma}(x,R)$ has at most an exponential growth.
\end{thm}

\begin{thm}
\label{countingbelow}
Let $x\in\mathrm{AdS}^{3}$.
For any increasing function $f \colon \mathbb{R} \to \mathbb{R}_{>0}$, 
there exists a discontinuous group
$\Gamma\equiv\Gamma_{f,x}$ for $\mathrm{AdS}^{3}$ satisfying 
\[\lim_{R\to\infty} \frac{N_{\Gamma}(x,R)}{f(R)}=\infty.\]
\end{thm}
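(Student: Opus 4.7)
The plan is to apply the construction $\Gam$ of Section \ref{Gam} with sequences tailored to $f$, and to lower-bound $N_{\Gamma_f}(E,R)$ by counting orbit elements coming from the single generators $(\alpha_k, \beta_k)$.

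First I would compute the single-generator orbit. A direct calculation shows that $\alpha_k\beta_k^{-1}$ is the affine contraction of $\Hy$ fixing $a_2(k)$ with ratio $(r(k)/R(k))^2$:
\[
\alpha_k\beta_k^{-1}(v) = (r(k)/R(k))^2\, v + a_2(k)\bigl(1-(r(k)/R(k))^2\bigr).
\]
Evaluating at $v = \sqrt{-1}$ and applying Fact \ref{hypdist} gives
\[
\|(\alpha_k, \beta_k)\cdot E\| = 2\log a_2(k) + 2\log(R(k)/r(k)) + \varepsilon_k,
\]
with $\varepsilon_k\to 0$ as $k\to\infty$ provided $a_2(k)\to\infty$ and $r(k)/R(k)\to 0$.

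Next, after reducing to the case $f(R)\geq\sqrt{R}$ by replacing $f$ with $g(R) := \max(f(R), \sqrt{R})$ (assumed continuous, strictly increasing), I would set $\xi(k) := g^{-1}(\sqrt k) - 1$ and choose $a_2(k) := e^{\xi(k)/4}$, $a_1(k) := a_2(k) - (a_2(k+1)-a_2(k))/2$, $R(k) := (a_2(k+1)-a_2(k))/k^2$, and $r(k) := R(k)\,e^{-\xi(k)/4}$. Then $2\log a_2(k) + 2\log(R(k)/r(k)) = \xi(k)$ and $r(k)/R(k) = e^{-\xi(k)/4} \to 0$, so Corollary \ref{proper} applies. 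The bound $g\geq\sqrt R$ forces $\xi(k+1)-\xi(k)\leq 1$, whence the consecutive-gap ratio of $a_2$ is bounded by $e^{1/4}<2$, and Assumptions \ref{assume1}--\ref{assume3} follow for $\nu$ large enough (the factor $1/k^2$ in $R(k)$ makes $R(k)$ much smaller than every gap and forces the ratios in Assumption \ref{assume3} to vanish).

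Finally, by the freeness of the action (Proposition \ref{free}), the single generators produce distinct orbit points, so choosing $\nu$ so that $|\varepsilon_k|\leq 1/2$ for $k\geq\nu$,
\[
N_{\Gamma_f}(E,R) \geq \#\{k\geq\nu : \xi(k)+1/2 \leq R\} \geq g(R+1/2)^2 - \nu \geq g(R)^2 - \nu,
\]
hence $N_{\Gamma_f}(E,R)/f(R) \geq g(R)^2/f(R) - \nu/f(R) \to \infty$, since $g(R)^2/f(R) \geq g(R) \to \infty$. The main obstacle is verifying Assumption \ref{assume1} uniformly in $f$: for very slowly growing $f$ the sequence $\xi$ would grow too fast and $a_2 = e^{\xi/4}$ could have unbounded gap ratios; replacing $f$ by $\max(f,\sqrt R)$ bounds these ratios and saves the construction.
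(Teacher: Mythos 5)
Your strategy is the right one and matches the paper's in outline: build $\Gam$ so that already the single generators give $\|(\alpha_k,\beta_k)E\|\approx \xi(k)$ for a prescribed sequence $\xi$, then count. Your computation of $\alpha_k\beta_k^{-1}$ as the affine contraction fixing $a_2(k)$ with ratio $(r(k)/R(k))^2$, and the resulting formula $\|(\alpha_k,\beta_k)E\|=2\log a_2(k)+2\log(R(k)/r(k))+o(1)$, is correct and is essentially the paper's computation (the paper uses $(\alpha_k^{-1},\beta_k^{-1})E$ and gets $4\log g(k)+o(1)$). The gap is exactly at the step you flag as the main obstacle: the claim that $g\geq\sqrt{R}$ forces $\xi(k+1)-\xi(k)\leq 1$ is false. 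The inequality $g(x)\geq\sqrt{x}$ only yields $g^{-1}(y)\leq y^2$, hence $\xi(k)\leq k-1$; an increasing sequence bounded above by $k-1$ can still have arbitrarily large individual increments. Concretely, let $f$ jump to the value $10$ near $x=0$ and then increase so slowly that it stays below $\sqrt{101}$ up to $x=101$; then $g=\max(f,\sqrt{\cdot\,})$ has $g^{-1}(\sqrt{k})$ close to $0$ for all $k\leq 100$ but $g^{-1}(\sqrt{101})=101$, so $\xi(101)-\xi(100)\approx 101$. Once the increments of $\xi$ are unbounded, $a_2(k+1)/a_2(k)=e^{(\xi(k+1)-\xi(k))/4}$ is unbounded; and even a bound $e^{1/4}$ on this \emph{ratio of values} would not control the ratio of consecutive \emph{gaps} $\Delta_{k+1}/\Delta_k$ with $\Delta_k:=a_2(k+1)-a_2(k)$ (nor would $\delta_k\le 1$ control $(e^{\delta_{k+1}/4}-1)/(e^{\delta_k/4}-1)$). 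Since you set $a_1(k+1)=a_2(k+1)-\Delta_{k+1}/2$, the third inequality of Assumption~\ref{assume1} needs roughly $\Delta_{k+1}<2\Delta_k$, and when $\Delta_{k+1}\geq 2\Delta_k$ you even get $a_1(k+1)\leq a_2(k)$: the half-circles of Figure~3 overlap and Proposition~\ref{free}, Corollary~\ref{proper}, and Proposition~\ref{diffofmu} are all unavailable. Assumption~\ref{assume3} fails for the same reason, since $R(k+1)/(a_1(k+1)-a_2(k))$ need not be small. (A smaller, fixable issue: $\max(f,\sqrt{R})$ need not be continuous or strictly increasing, so $g^{-1}$ is not yet defined.)

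The paper sidesteps this by not prescribing the target radii directly: it first replaces $f$ by a \emph{convex}, strictly increasing, $C^1$ majorant $F$ (Remark~\ref{atarimae}) and takes $g$ to be the inverse of $x\mapsto xF(x)$, which is concave with $g'$ decreasing; then $a_1(k)=g(k)$, $a_2(k)=g(k+\tfrac12)$ have decreasing gaps, Remark~\ref{assrem}~(2) verifies Assumptions~\ref{assume1}--\ref{assume3} in one stroke, and the desired growth is recovered at the end from $\#\{k: 4\log g(k)+1\leq R\}\geq e^{(R-1)/4}F(e^{(R-1)/4})-\nu\geq e^{(R-1)/4}F(R)-\nu$. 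To repair your argument you would need to replace $\xi$ by a regularized (e.g.\ concave) minorant that still tends to infinity fast enough relative to $f$ --- at which point you are essentially re-deriving the paper's choice of sequences.
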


\begin{rem}
\label{rem-riemann}
Theorem \ref{countingbelow} applied to the function $f(R)=\exp(\exp(R))$
shows the existence of a discontinuous group $\Gamma$ for $\mathrm{AdS}^{3}$ such that
\[
\lim_{R\to\infty} \frac{N_{\Gamma}(x,R)}{\mathrm{vol}(B(R+c))}=\infty
\] 
for any $c>0$ since the volume $\mathrm{vol}(B(R))$ 
is of exponential growth as $R\to\infty$. 
Thus an analogue of the Riemannian case (\ref{riemann}) below
does not hold.
\end{rem}

The above theorems deal with the setting 
where the metric tensor of $X$ is indefinite and
$\Gamma$ is a discontinuous group for $X$.
Let us compare them with some known results 
in the following different settings:
\begin{itemize}
\item
$\Gamma$ is a discontinuous group for $X$, but
$X$ is Riemannian (the metric tensor of $X$ is positive definite);
\item
the metric tensor of $X$ is indefinite, 
but $\Gamma$ is not a discontinuous group for $X$ 
(e.g.\ $\Gamma$ is a lattice of the isometry group of $X$).
\end{itemize}

Suppose that $X$ is a complete Riemannian manifold, and that
$\Gamma$ is a discrete group of isometries of $X$.
We write $B(R)$ for the ball of radius $R$ centered at a fixed point in $X$. Then we have (cf.\ Milnor \cite[Thm.\ 1]{Mi68})
\begin{align}
\label{riemann}
\forall x\in X,\ \exists c>0,\ \limsup_{R\to\infty}\frac{\#(\Gamma x\cap B(R))}
{\mathrm{vol}(B(R+c))}<\infty.
\end{align}
The estimate (\ref{riemann}) does not require that $\Gamma$ is finitely generated, but
the Riemannian assumption is crucial as shown in Remark \ref{rem-riemann}.
The inequality (\ref{riemann}) is probably well known, but for the reader's convenience,
we will give a proof in the appendix.

A semisimple symmetric space $X=G/H$ admits a 
$G$-invariant pseudo-Riemannian structure.
Eskin-McMullen \cite{EsMc93} studied the counting of an orbit 
of a lattice $\Gamma$ of $G$ in $X=G/H$.
Let us apply their result \cite[Thm.\ 1.4]{EsMc93} 
to the specific case where $X=\mathrm{AdS}^{3}$
and $H=\mathrm{diag}(\mathrm{SL}(2,\mathbb{R}))\subset G=\mathrm{SL}(2,\mathbb{R})\times \mathrm{SL}(2,\mathbb{R})$.
Then it tells us that if $\Gamma\cap H$ is a lattice in $H$, then at the base point $o=eH\in X$
\begin{align}
\label{Eskin-Mcmullen}
\lim_{R\to\infty}\frac{N_{\Gamma}(o,R)}{\mathrm{vol}(B(R))}=
\frac{\mathrm{vol}(\Gamma\backslash G)}
{\mathrm{vol}((\Gamma\cap H)\backslash H)}.
\end{align}
In the right-hand side, the Haar measures of $G$ and $H$
(and therefore, the induced measures of $\Gamma\backslash G$
and $(\Gamma\cap H)\backslash H$)
are normalized such that the Fubini theorem for the fibration
$H\rightarrow G\rightarrow X=G/H$ is given by
$dg=dxdh$, where $dx$ is the volume element of the anti-de Sitter space
$X=\mathrm{AdS}^{3}$ (see Section \ref{preliminary}).
We note that their setting is different from ours: 
$\Gamma$ in \cite{EsMc93} is a lattice of $G$,
hence does not act properly discontinuously on $X$.

We summarize these results about 
the asymptotic behaviors 
of $N_{\Gamma}(x,R)$ in each setting in Table \ref{table-asymptotic} below:
\begin{table}[htb]
\label{table-asymptotic}
\begin{center}
\caption{The asymptotic behaviors of $N_{\Gamma}(x,R)$
}
\begin{tabular}{|c|c|c|c|}
\hline
& $\Gamma$ & $x\in\mathrm{AdS}^{3}$ & $N_{\Gamma}(x,R)$ \\
\hline
Eskin-McMullen \cite{EsMc93} 
& $\forall$ lattice in $G$ & special $x$ & $\sim Ae^{R}$ \\ 
\hline
Kassel-Kobayashi \cite{KaKob16} & $\forall$ sharp discont.\ gp.\ 
& general $x$
& $\leq Ae^{aR}$\\
\hline
Theorem \ref{countingabove-intro} & $\exists$ non-sharp discont.\ gp.\
& general $x$ & $\leq Ae^{aR}$\\
\hline
Theorem \ref{countingbelow} & $\exists$ non-sharp discont.\ gp.\
& general $x$ & $\gg \exp(e^{R})$ \\
\hline
\end{tabular}
\end{center}
\end{table}

\begin{rem}
Kassel-Kobayashi \cite{KaKob16} gave a 
uniform estimate of $N_{\Gamma}(x,R)$ with respect to $x\in\mathrm{AdS}^{3}$. 
We prove such a uniform estimate for Theorem \ref{countingabove-intro},
but not for Theorem \ref{countingbelow}.
\end{rem}

\subsection{Spectrum of the Laplacian on
\texorpdfstring{$\Gamma_{\nu}\backslash\mathrm{AdS}^{3}$}{}}
Let $\Gamma$ be a discontinuous group for
the anti-de Sitter space $X:=\mathrm{AdS}^{3}$.
Then the quotient space 
$X_{\Gamma}:=\Gamma\backslash X$ is a $C^{\infty}$-manifold and
the quotient map $X\rightarrow X_{\Gamma}$
is a smooth covering.
Thus $X_{\Gamma}$ inherits an anti-de Sitter structure from $X$,
and in particular, is a Lorentzian manifold.
As in the Riemannian case, 
one defines the Laplacian 
$\square_{X_{\Gamma}}:=\mathrm{div}\circ\mathrm{grad}$,
a second-order differential operator on $X_{\Gamma}$. 

Kassel-Kobayashi \cite{KaKob16} initiated the study of global analysis 
on the anti-de Sitter manifold $X_{\Gamma}$ 
(actually in a much more general setting).
They studied the \textit{discrete spectrum}, namely 
the set of $L^{2}$-eigenvalues of the Laplacian $\square_{X_{\Gamma}}$
on $X_{\Gamma}$, denoted by
\[
\mathrm{Spec}_{d}(\square_{X_{\Gamma}}):= 
\{ \lambda \in \mathbb{C} \mid \exists f \in L^2(X_{\Gamma})\smallsetminus\{0\},\ 
\square_{X_{\Gamma}} f = \lambda f \text{ in the weak sense}\}.
\]
Here $L^2(X_{\Gamma})$ is the Hilbert space of square integrable functions 
on $X_{\Gamma}$ with respect to the Radon measure induced by the Lorentzian structure.
We note that in contrast to the Riemannian case where the Laplacian
is an elliptic differential operator, the Laplacian for the Lorentzian manifold $X_{\Gamma}$
is a hyperbolic operator, and thus eigenfunctions may and may not be smooth functions
by the failure of the elliptic regularity theorem (see \cite[Sect.\ 3.1]{KaKob19} for example).
Kassel-Kobayashi \cite{KaKob16} proved the following: 
if $\Gamma$ is sharp, 
then there exists $m_{0}=m_{0}(\Gamma)>0$ such that
\[
\mathrm{Spec}_{d}(\square_{X_{\Gamma}})\supset \{4m(m-1)\mid m\in\mathbb{Z}
\text{ and } m> m_{0}\}.
\]
In particular, they proved that the discrete spectrum $\mathrm{Spec}_{d}(\square_{X_{\Gamma}})$ is infinite 
in the setting where $\Gamma$ is sharp.

A natural question would be whether the Laplacian $\square_{X_{\Gamma}}$ 
still has an $L^{2}$-eigenvalue if the discontinuous group $\Gamma$ is non-sharp. 
As an application of the sharpness criterion (Proposition \ref{non-sharp})
and an upper estimate of the counting as in Theorem \ref{countingabove-intro}, we see that the machinery developed in \cite{KaKob16} also can be applied to \textit{some} non-sharp discontinuous groups, and prove:

\begin{thm}[see Theorem \ref{countingabove} and Example \ref{ex-countingabove}]
\label{non-sharp-spectrum-intro}
There exist a non-sharp discontinuous group $\Gamma$ for $\mathrm{AdS}^{3}$
and $m_{0}=m_{0}(\Gamma)>0$ such that
\[
\mathrm{Spec}_{d}(\square_{X_{\Gamma}})\supset
\{4m(m-1)\mid m\in\mathbb{Z}\text{ and } m> m_{0}\}.
\]
\end{thm}

\subsection{Organization of the paper}
\label{organization-sec}
In Section \ref{preliminary}, 
we give preliminary results, 
including a pseudo-ball $B(R)$
and the Kobayashi-Benoist properness criterion 
applied to our $\mathrm{AdS}^{3}$ setting.
In Section \ref{Gam}, 
we construct a family of infinitely generated Schottky-like
discontinuous groups $\Gamma_{\nu}\equiv
\Gamma_{\nu}(a_{-},a_{+},r,R)$ for $\mathrm{AdS}^{3}$
associated to quadruples $(a_{-},a_{+},r,R)$ of real-valued sequences
satisfying some conditions for a sufficiently large $\nu\in\mathbb{N}$.
Moreover, we recall the notion of sharpness for discontinuous groups,
and find a necessary and sufficient condition on the quadruple $(a_{-},a_{+},r,R)$ such that 
$\Gamma_{\nu}$ is sharp.
In Section \ref{countingbelowsec}, we find a lower bound for the counting 
$N_{\Gamma_{\nu}}(x,R)$, and prove Theorem \ref{countingbelow}.
In Section \ref{dspec}, we find a sufficient condition on the quadruple
$(a_{-},a_{+},r,R)$ such that the counting $N_{\Gamma_{\nu}}(x,R)$ is at most of
exponential growth, and complete the proof of Theorem \ref{countingabove-intro}
with the sharpness criterion given in Section \ref{Gam}.
The proof of Theorem \ref{non-sharp-spectrum-intro} is then given by
applying the method established by Kassel-Kobayashi \cite{KaKob16}.

\vspace{\baselineskip}
\noindent
\textbf{Notation}. $\mathbb{N}=\{0,1,2,\ldots\}$ and $\mathbb{N}_{+}=\{1,2,3,\ldots\}$.

\section{Preliminary results about \texorpdfstring{$\mathrm{AdS}^{3}$}{}}
\label{preliminary}
In this section, we collect some preliminary results about $\mathrm{AdS}^{3}$
that will be needed for later sections.

Let $V$ be a four-dimensional real vector space equipped with
a quadratic form $Q$ of signature $(2,2)$ on $V$,
and $X$ the hypersurface given by $X=\{v\in V\mid Q(v)=1\}$.
The tangent space $T_{v}X$ at $v\in X$ is identified with 
the orthogonal complement $(\mathbb{R}v)^{\bot}$ in $V$ with respect to $Q$.
The restriction of $-Q$ to the hyperplane 
$(\mathbb{R}v)^{\bot}$ is a quadratic form of signature $(2,1)$,
which induces a Lorentzian structure on $X$ 
with constant sectional curvature $-1$.
The resulting Lorentzian manifold is called 
the $3$-dimensional anti-de Sitter space $\mathrm{AdS}^{3}$.

\subsection{Pseudo-balls in \texorpdfstring{$\mathrm{AdS}^{3}$}{}}
\label{psphere}

In this subsection, 
we consider pseudo-balls $B(R)$ on the Lorentzian manifold 
$\mathrm{AdS}^{3}$. 
We work with coordinates on $\mathrm{AdS}^{3}$ 
by choosing $V=\mathrm{M}(2,\mathbb{R})$ and $Q=\mathrm{det}$.
Then $\mathrm{AdS}^{3}$ is identified with $\mathrm{SL}(2,\mathbb{R})$.
The direct product group 
$\mathrm{SL}(2,\mathbb{R})\times \mathrm{SL}(2,\mathbb{R})$
acts on $V=\mathrm{M}(2,\mathbb{R})$
by left and right multiplication, which induces an isometric and transitive action
on $\mathrm{AdS}^{3}$. Thus
\[
\mathrm{AdS}^{3}\cong (\mathrm{SL}(2,\mathbb{R})\times\mathrm{SL}(2,\mathbb{R}))
/\mathrm{diag}(\mathrm{SL}(2,\mathbb{R})).
\]

Let $o$ be the base point in $\mathrm{AdS}^{3}$ corresponding to the 
identity matrix in $\mathrm{SL}(2,\mathbb{R})$.
The \textit{pseudo-distance} $\|g\|(\geq0)$ of $g\in\mathrm{SL}(2,\mathbb{R})$ 
from the base point $o$ is defined by the formula
\begin{align}
\label{def-pdist}
2\cosh\|g\|=\operatorname{Tr}(^tgg).
\end{align}
We give two alternative definitions of the pseudo-distance $\|g\|$ as below.

First, for $\theta\in[0,2\pi]$ and $t\geq 0$,
we set
$k(\theta):=
\begin{pmatrix}
\cos \theta & -\sin\theta \\
\sin\theta & \cos \theta
\end{pmatrix}$ and
$a(t) :=\begin{pmatrix}
e^{t} & 0 \\
0 & e^{-t}
\end{pmatrix}$.
Any element $g\in\mathrm{SL}(2,\mathbb{R})$
can be expressed by the Cartan decomposition $g=k(\theta_{1})a(t)k(\theta_{2})$ with 
$\theta_{1},\theta_{2}\in[0,2\pi]$ and unique $t\geq0$.
Then (\ref{def-pdist}) implies 
\begin{align}
\label{pdist-cartan}
\|g\|=2t.
\end{align}
This interpretation shows readily that 
the map $\|\cdot\|\colon\mathrm{SL}(2,\mathbb{R})\rightarrow[0,\infty)$
is proper and that for any $R>0$,
\begin{align}
\label{pseudo-ball}
B(R):=\{g\in\mathrm{SL}(2,\mathbb{R})\mid \|g\|\leq R\}
\end{align}
is a compact subset of $\mathrm{SL}(2,\mathbb{R})$, to which we refer as
the \textit{pseudo-ball} of radius $R$.
The family $\{B(R)\}_{R>0}$ is well-rounded 
(Eskin-McMullen \cite[Thm.\ 6.1]{EsMc93}).

Second, we realize the hyperbolic space $\mathbb{H}^{2}$ as the upper-half plane 
$\{x+\sqrt{-1} y\in\mathbb{C}\mid y>0\}$ endowed with the metric
tensor $ds^{2}=y^{-2}(dx^{2}+dy^{2})$.
We write $d_{\mathbb{H}^{2}}$ for the hyperbolic distance of $\mathbb{H}^{2}$.
The group $\mathrm{SL}(2,\mathbb{R})$ acts isometrically on $\mathbb{H}^{2}$ 
by linear fractional transformations.
In this model, the pseudo-distance $\|g\|$ is computed by (\ref{pdist-cartan})
as follows:
\begin{lem}[see e.g.\ {\cite[(A.1) and (A.2)]{GuKa17}}]
\label{hypdist}
For any $g \in \mathrm{SL}(2,\mathbb{R})$,
\[\|g\|=d_{\mathbb{H}^{2}}(g\sqrt{-1},\sqrt{-1}).\]
In particular, for any point $x + \sqrt{-1}y \in \mathbb{H}^{2}$,
\[2 \cosh d_{\mathbb{H}^{2}}(x + \sqrt{-1}y, \sqrt{-1}) = \frac{x^2 + y^2 + 1}{y}.\]
\end{lem}

The following properties of the pseudo-distance follow from Lemma \ref{hypdist}:
\begin{lem}
\label{prop-pdist}
For $g,g'\in\mathrm{SL}(2,\mathbb{R})$,
\begin{enumerate}[label=$(\arabic*)$]
\item
$\|g^{-1}\|=\|g\|$.
\item
$|\|g\|-\|g'\||\leq \|gg'\|\leq\|g\| +\|g'\|$.
\end{enumerate}
\end{lem}
The Jacobian of the Cartan decomposition
$(0,2\pi)\times (0,\infty) \times (0,2\pi)
\rightarrow \mathrm{SL}(2,\mathbb{R})$ defined by 
$(\theta_{1},t,\theta_{2})\mapsto k(\theta_{1})a(t)k(\theta_{2})$
equals $\sinh (2t)$ with respect to this Lorentzian structure on 
$\mathrm{SL}(2,\mathbb{R})\cong\mathrm{AdS}^{3}$ 
and the standard metrics of the intervals $(0,2\pi)$ and $(0,\infty)$.
Hence the following integral formula holds:
\begin{align}
\label{cartan-integral}
\int_{\mathrm{SL}(2,\mathbb{R})}f(g)dg =
\int_{0}^{2\pi}\int_{0}^{\infty}\int_{0}^{2\pi}
f(k(\theta_{1})a(t)k(\theta_{2}))\sinh(2t)d\theta_{1}dtd\theta_{2}.
\end{align}
Therefore, the volume $\mathrm{vol}(B(R))$ equals $2\pi^{2}(\cosh(R) -1)$
since $k(\theta_{1})a(t)k(\theta_{2})\in B(R)$ if and only if $2t \leq R$.

\subsection{Discontinuous groups for \texorpdfstring{$\mathrm{AdS}^{3}$}{}}
\label{criterion}

Let $G$ be a Lie group,
$H$ a closed subgroup of $G$,
and $\Gamma$ a discrete subgroup of $G$,
which acts naturally on $X:=G/H$ from the left.
In this subsection, 
we explain the Kobayashi-Benoist criterion for the proper discontinuity
of the $\Gamma$-action on $X$
applied to our specific setting where 
$G=\mathrm{SL}(2,\mathbb{R})\times\mathrm{SL}(2,\mathbb{R})$,
$H=\mathrm{diag}(\mathrm{SL}(2,\mathbb{R}))$, and 
$X=\mathrm{AdS}^{3}$.

Throughout this paper, we mean by a discontinuous group for $X$
a discrete subgroup $\Gamma$ of $G$ acting 
properly discontinuously and freely on $X$ (Kobayashi \cite[Def.\ 1.3]{Ko01}).
A torsion-free discrete subgroup $\Gamma$ of $G$ is a discontinuous group for $X$ if and only if 
$\Gamma$ acts properly discontinuously on $X$.
Proper discontinuity is a serious constraint
when the isotropy subgroup of $G$
on $X$ is noncompact. Geometrically, one should note that 
not every discrete subgroup of isometries can act 
properly discontinuously on a pseudo-Riemannian manifold $X$.
Kobayashi \cite{Kob96} and Benoist \cite{Ben96}
established a properness criterion for reductive $G$
generalizing the original properness criterion of Kobayashi \cite{Kob89}.

Applying the Kobayashi-Benoist properness criterion to our specific setting,
we can determine whether the $\Gamma$-action on $\mathrm{AdS}^{3}$
is properly discontinuous in terms of the pseudo-distance defined in Section \ref{psphere}
as follows: 

\begin{fact}[Kobayashi {\cite[Thm.\ 3.4]{Kob96}} and Benoist {\cite[Thm.\ 5.2]{Ben96}}]
\label{KoBe}

Let $\Gamma$ be a discrete subgroup of 
$\mathrm{SL}(2,\mathbb{R})\times\mathrm{SL}(2,\mathbb{R})$.
The following are equivalent:

\begin{enumerate}
\item[$(\mathrm{i})$]
The action of $\Gamma$ on $\mathrm{AdS}^{3}$ is properly discontinuous.
\item[$(\mathrm{ii})$]
For any $C > 0$, the set $\{ (\alpha,\beta) \in \Gamma \mid 
\left | \|\alpha\| - \|\beta\| \right| <C \}$ is finite. 
\end{enumerate}
\end{fact}

\section{Discontinuous groups 
\texorpdfstring{$\Gamma_{\nu}(a_{-},a_{+},r,R)$}{} for \texorpdfstring{$\mathrm{AdS}^{3}$}{}}
\label{Gam}
In this section, 
we introduce a family of Schottky-like subgroups
$\Gamma_{\nu}(a_{-},a_{+},r,R)$ of 
$G=\mathrm{SL}(2,\mathbb{R})\times\mathrm{SL}(2,\mathbb{R})$
in Definition \ref{def-gam} associated to the following data:
\begin{itemize}
\item
$\nu\in\mathbb{N}$;
\item
$a_{-},a_{+}, r,R\colon\mathbb{N}\rightarrow\mathbb{R}_{>0}$
satisfying Assumptions 1--3 below.
\end{itemize}
We give a properness criterion and a sharpness criterion for 
the action of $\Gamma_{\nu}(a_{-},a_{+},r,R)$ on $\mathrm{AdS}^{3}$ for any sufficiently large $\nu\in\mathbb{N}$ in terms of the quadruple $(a_{-},a_{+},r,R)$.
Furthermore, we also give sufficient conditions on $\nu$
for discreteness, properness, 
and sharpness of $\Gamma_{\nu}(a_{-},a_{+},r,R)$.
In Section \ref{subsection:construction}, 
we construct an infinitely generated, free discrete subgroup $\Gamma_{\nu}(a_{-},a_{+},r,R)$ of $G$ (Proposition \ref{prop:discrete-free}).
In Section \ref{subsection:key}, we introduce 
a constant $\varepsilon(\nu)$ and prove a key proposition (Proposition \ref{prop:key-ineq}) which will be repeated in later sections.
In Section \ref{subsection:proper}, based on the Kobayashi-Benoist properness criterion (Fact \ref{KoBe}), we give a criterion for the $\Gamma_{\nu}(a_{-},a_{+},r,R)$-action on $\mathrm{AdS}^{3}$ to be proper (Proposition \ref{proper}).
In Section \ref{sharpsec}, we give a criterion for the $\Gamma_{\nu}(a_{-},a_{+},r,R)$-action on $\mathrm{AdS}^{3}$ to be $(c,0)$-sharp 
in the sense of Kassel-Kobayashi \cite{KaKob16} (Proposition \ref{non-sharp}).

\subsection{Construction of discrete subgroups 
\texorpdfstring{$\Gamma_{\nu}(a_{-},a_{+},r,R)$}{}}

\label{subsection:construction}

We introduce a coordinate map 
$\tau\colon\mathbb{R}\times\mathbb{R}\times\mathbb{R}_{>0}
\rightarrow \mathrm{SL}(2,\mathbb{R})$ by 
\begin{align}
\label{tau-def}
\tau=\tau(x_{-},x_{+},u):=\frac{1}{u}
\begin{pmatrix}
x_{+} & -(x_{-}x_{+}+u^{2}) \\
1 & -x_{-}
\end{pmatrix}
\in\mathrm{SL}(2,\mathbb{R}).
\end{align}

\begin{defi}
\label{def-gam}
Let $(a_{-},a_{+},r,R)$ be a quadruple of positive real valued sequences. 
Then we define a sequence of elements 
$(\alpha_{k},\beta_{k}) \in G$ by 
\begin{align}
\label{alpha-beta-def}
\alpha_k := \tau(a_{-}(k),a_{+}(k),r(k)),\ 
\beta_k := \tau(a_{-}(k),a_{+}(k),R(k)) \in \mathrm{SL}(2,\mathbb{R}).
\end{align}
For $\nu\in\mathbb{N}$, we define $\Gamma_{\nu}(a_{-},a_{+},r,R)$ as the subgroup 
of $G$ generated by $\{(\alpha_k,\beta_k)\mid k=\nu,\nu+1,\ldots \}$.
\end{defi}

\begin{nota}
\label{notation-j-rho-F}
Let $F^{\infty}$ denote the free group generated by countably many elements 
$\{\gamma_k\}_{k \in \mathbb{N}}$.
Let $(\alpha_{k},\beta_{k})\in G$ be a sequence of elements associated to
a quadruple $(a_{-},a_{+},r,R)$ by (\ref{alpha-beta-def}).
Then we write $j\colon F^{\infty}\rightarrow \mathrm{SL}(2,\mathbb{R})$ and
$\rho\colon F^{\infty}\rightarrow \mathrm{SL}(2,\mathbb{R})$
for the group homomorphisms
such that $j(\gamma_k) = \alpha_{k}$ and $\rho(\gamma_k) = \beta_{k}$
for all $k \in \mathbb{N}$.
For $\nu\in\mathbb{N}$, let $F_{\nu}^{\infty}$
be the subgroup of $F^{\infty}$ generated by $\{\gamma_k\}_{k=\nu}^{\infty}$.
\end{nota}
Then, by Definition \ref{def-gam}, 
\begin{align*}
\Gamma_{\nu}(a_{-},a_{+},r,R) = 
\{(j(\gamma), \rho(\gamma)) \mid \gamma \in F^{\infty}_{\nu}\}.
\end{align*}

\begin{ex}
The subgroup $\Gamma_{\nu}(a_{-},a_{+},r,R)$ for
$(a_{-}(k),a_{+}(k),r(k),R(k)) = (k^2,k^2 + k,1,\log k)$
coincides with $\Gamma_{\nu}^{j,\rho}$ in Gu\'{e}ritaud-Kassel 
\cite[Sect.\ 10.1]{GuKa17}.
\end{ex}

\begin{prop}
\label{prop:discrete-free}
Suppose that a quadruple 
of positive real valued sequences $(a_{-},a_{+},r,R)$ 
satisfies the following assumptions:
\begin{ass}
    \label{ass:assume1}
    For any sufficiently large integer $k$, we have 
    \begin{align}
    r(k) &< R(k), \nonumber\\
    a_{-}(k) + R(k) &< a_{+}(k) - R(k), \label{ineq:assume1}\\
    a_{+}(k) + R(k) &< a_{-}(k+1) - R(k+1). \nonumber
    \end{align}
\end{ass} 
\begin{ass}
    \label{ass:assume2}
    $\displaystyle \lim_{k \to \infty}a_{+}(k) = \lim_{k\to\infty}a_{-}(k)=\infty.$
\end{ass}
Let $\nu\in\mathbb{N}$. 
If \eqref{ineq:assume1} holds 
for any integer $k\geq \nu$, then the subgroup $\Gamma_{\nu}(a_{-},a_{+},r,R)$ of $G$ is discrete and free.
\end{prop}

The proof is based on the ping-pong lemma.
For this, we need some setups. Let $|\cdot|$ denote the Euclidean norm in the upper-half
plane $\mathbb{H}^{2} \subset \mathbb{C}$. Associated to the quadruple $(a_{-},a_{+},r,R)$, we set
\begin{align}
\label{half-disks}
A_{k}^{\epsilon}:=\{z\in \mathbb{H}^{2} \mid |z-a_{\epsilon}(k)|\leq r(k)\},\ 
B_{k}^{\epsilon}:=\{z\in \mathbb{H}^{2} \mid |z-a_{\epsilon}(k)|\leq R(k)\}.
\end{align}
for $k\in\mathbb{N}$ and $\epsilon\in\{+,-\}$, see Figure \ref{arrangement}.
Then we claim:
\begin{itemize}
\item
$A_{\nu}^{-},A_{\nu}^{+},A_{\nu+1}^{-},A_{\nu+1}^{+},\ldots$ 
are disjoint;
\item
$\alpha_{k}(\mathbb{H}^{2}\smallsetminus A_{k}^{-})\subset A_{k}^{+}$ 
for $k\geq\nu$;
\item
$\bigcup_{k\geq\nu}(A_{k}^{-}\cup A_{k}^{+})$ 
is a proper closed subset of $\mathbb{H}^{2}$.
\end{itemize}
The first claim is immediate from the fact that the inequalities \eqref{ineq:assume1} in Assumption \ref{ass:assume1} hold for any integer $k\geq \nu$.

The second claim is implied by the following key property of 
the map $\tau=\tau(x_{-},x_{+},u)$ in (\ref{tau-def}):
\begin{align}
\label{tau-property-arrange}
|z-x_{-}|> u \text{ if and only if } |\tau(z)-x_{+}|< u \text{ for }z\in\mathbb{H}^{2},
\end{align}
which is readily seen from the identity
\begin{align}
\label{tau-property}
\tau(z)-x_{+}=-u^{2}(z-x_{-})^{-1}.
\end{align}

To prove the third claim, it suffices to show
\begin{align}
\label{eq:third-claim}
\lim_{k\to\infty}(|z-a_{\epsilon}(k)| - r(k)) = \infty
\end{align}
for any $z\in \mathbb{H}^{2}$ and 
any $\epsilon\in\{+,-\}$.
Recall $a_{\epsilon}(k)>0$ for any $k\in\mathbb{N}$. 
Hence, by Assumption 1, we have 
\[
|z-a_{\epsilon}(k)| - r(k) \geq a_{\epsilon}(k)-|z|-r(k)
\geq a_{+}(k-1)-|z|.
\]
By Assumption \ref{ass:assume2}, we obtain \eqref{eq:third-claim}.
This proves the third claim.

We are ready to prove Proposition \ref{prop:discrete-free}.
\begin{proof}[Proof of Proposition \ref{prop:discrete-free}]
The subgroup of $\mathrm{SL}(2,\mathbb{R})$ generated by 
$\{\alpha_{k}\mid k=\nu,\nu+1,\ldots\}$ is free and discrete  
by the standard ping-pong argument,
namely, by applying Lemma \ref{ping-pong} below to 
$H=\mathrm{SL}(2,\mathbb{R})$, $Y=\mathbb{H}^{2}$, and 
$Y_{k}^{\pm}=A_{k+\nu}^{\pm}$.
Hence $\Gamma_{\nu}(a_{-},a_{+},r,R)$ is also free and discrete.
\end{proof}

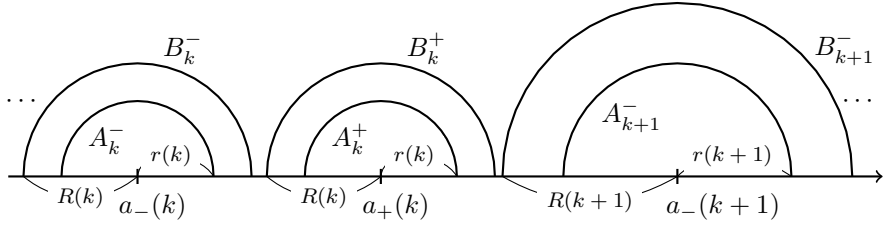
\begin{figure}
\begin{tikzpicture}[scale=1.0]
\draw [thick,->] (-0.2,0.0) -- (11.3,0.0);

\node at (0,1) {$\cdots$};

\draw [thick,-] (1.5,-0.1) -- (1.5,0.1);
\draw [thick,domain=0:180] plot ({1.5+1.5*cos(\x)}, {1.5*sin(\x)});
\draw [thick,domain=0:180] plot ({1.5+cos(\x)}, {sin(\x)});
\node at (1.7,-0.4) {$a_{-}(k)$};
\node at (1.1,0.5){$A_{k}^{-}$};
\node at (2.1, 1.7){$B_{k}^{-}$};
\draw (0,0) to 
[out=-40, in=220, edge node={node [midway,fill=white]{\footnotesize $R(k)$}}] (1.5,0);
\draw (1.5,0) to 
[out=85, in=120, edge node={node [midway,fill=white]{\footnotesize $r(k)$}}] (2.5,0);

\draw [thick,-] (4.7,-0.1) -- (4.7,0.1);
\draw [thick,domain=0:180] plot ({4.7+1.5*cos(\x)}, {1.5*sin(\x)});
\draw [thick,domain=0:180] plot ({4.7+cos(\x)}, {sin(\x)});
\node at (4.9,-0.4) {$a_{+}(k)$};
\node at (5.2, 0.3) {$r(k)$};
\node at (3.95, -0.3) {$R(k)$};
\node at (4.3,0.5){$A_{k}^{+}$};
\node at (5.3, 1.7){$B_{k}^{+}$};
\draw (3.2,0) to 
[out=-40, in=220, edge node={node [midway,fill=white]{\footnotesize $R(k)$}}] (4.7,0);
\draw (4.7,0) to 
[out=85, in=120, edge node={node [midway,fill=white]{\footnotesize $r(k)$}}] (5.7,0);

\draw [thick,-] (8.6,-0.1) -- (8.6,0.1);
\draw [thick,domain=0:180] plot ({8.6+2.3*cos(\x)}, {2.3*sin(\x)});
\draw [thick,domain=0:180] plot ({8.6+1.5*cos(\x)}, {1.5*sin(\x)});
\node at (9.2,-0.4) {$a_{-}(k+1)$};
\node at (8,0.8){$A_{k+1}^{-}$};
\node at (10.8,1.7){$B_{k+1}^{-}$};
\draw (6.3,0) to 
[out=-30, in=210, edge node={node [midway,fill=white]{\footnotesize $R(k+1)$}}] (8.6,0);
\draw (8.6,0) to 
[out=45, in=145, edge node={node [midway,fill=white]{\footnotesize $r(k+1)$}}] (10.1,0);

\node at (11,1) {$\cdots$};
\end{tikzpicture}
\caption{$A_{k}^{\pm}$ and $B_{k}^{\pm}$ in $\mathbb{H}^{2}$} \label{arrangement}
\end{figure}

\begin{lem}[The ping-pong lemma]
\label{ping-pong}
Let $H$ be a topological group acting continuously on a topological space $Y$,
and $\Gamma$ the subgroup generated by $h_{0}, h_{1},\ldots\in H$. 
Suppose that there exist disjoint closed subsets 
$Y_{0}^{-},Y_{0}^{+},Y_{1}^{-},Y_{1}^{+},\ldots$ of $Y$
satisfying the following:
\begin{enumerate}[label=$(\roman*)$]
\item
$h_{k}(Y\smallsetminus Y_{k}^{-})\subset Y_{k}^{+}$ for any $k\in\mathbb{N}$.
\item
$\bigcup_{k\in\mathbb{N}}(Y_{k}^{-}\cup Y_{k}^{+})$ is a proper closed subset of $Y$.
\end{enumerate}
Then, $\Gamma$ is a free discrete subgroup of $H$.
\end{lem}

Although the proof of Lemma \ref{ping-pong} 
is standard,
we give a proof for the sake of completeness.

\begin{proof}
The conditions (i) and (ii) may be restated as 
$h_{k}^{-1}(Y\smallsetminus Y_{k}^{+})\subset Y_{k}^{-}$ 
for any $k\in\mathbb{N}$ and
$U:=Y\smallsetminus \bigcup_{i\in\mathbb{N}}(Y_{i}^{-}\cup Y_{i}^{+})$
is a non-empty open subset of $Y$,
respectively.
Take any $h=h_{i_{0}}^{s_{0}}\cdots h_{i_{n}}^{s_{n}} \in \Gamma$.
Suppose that this is a reduced expression, namely, 
$s_{0},\ldots,s_{n}\in\{1,-1\}$ and $s_{k}=s_{k+1}$ whenever $i_{k}=i_{k+1}$ for $0\leq k<n$.
Then we have $h(U)\subset Y_{i_{0}}^{s_{0}}$ and thus $h(U)\cap U=\emptyset$.
Hence $h\neq e$ and $\{h_{0}, h_{1},\ldots\}$ is a free generator of $\Gamma$.
Take a neighborhood $V$ of $e$ in $H$
and a non-empty open subset $W$ of $U$ such that $V\cdot W\subset U$.
Then $\Gamma\cap V=\{e\}$ and thus $\Gamma$ is discrete in $H$.
\end{proof}

As a byproduct of the above discussion, we also see the following:
\begin{cor}
\label{cor:discrete-free}
Let a quadruple $(a_{-},a_{+},r,R)$ and $\nu\in \mathbb{N}$
be as in the setting of Proposition \ref{prop:discrete-free}.
Let $z\in \mathbb{H}^{2}\smallsetminus \bigcup_{k\geq\nu}(B_{k}^{-}\cup B_{k}^{+})$ and $\gamma\in F^{\infty}_{\nu}\smallsetminus\{e\}$.
If the expression $\gamma=\gamma_{i_{0}}^{s_{0}}\ldots\gamma_{i_{n}}^{s_{n}}$ 
is reduced, 
then we have $j(\gamma)z \in A_{k_{0}}^{s_{0}}$ 
and $\rho(\gamma)z \in B_{k_{0}}^{s_{0}}$.
Here we have used the convention that 
$A_{k}^{\pm1}=A_{k}^{\pm}$ and $B_{k}^{\pm1}=B_{k}^{\pm}$.
\end{cor}

\begin{ex}
\label{exgamma}
The quadruples 
$(a_{-},a_{+},r,R)$ in (1)--(3) of Table \ref{table-quadruple}
satisfy Assumptions \ref{ass:assume1} and \ref{ass:assume2} in Proposition \ref{prop:discrete-free}.
For the reader's convenience, we list in Table \ref{table-quadruple} also the asymptotic behaviors of the counting $N_{\Gamma}(x,R)$ 
as $R$ tends to infinity where $\Gamma$
are the discontinuous groups $\Gamma_{\nu}(a_{-},a_{+},r,R)$ associated to
the quadruples $(a_{-},a_{+},r,R)$ in (1) and (3). We refer to 
Example \ref{ex-countingabove} and Example \ref{expexp} below 
for details about the counting.
\end{ex}
\begin{table}[htb]
\caption{Examples of $(a_{-},a_{+},r,R)$ satisfying Assumptions \ref{ass:assume1} and \ref{ass:assume2}}
\label{table-quadruple}
\begin{center}
\begin{tabular}{|c|c|c|c|c|c|}
\hline
& $a_{-}(k)$ & $a_{+}(k)$ & $r(k)$ & $R(k)$ & $N_{\Gamma}(x,R)$ \\
\hline
(1) & $\exp(e^{k})$ & $\exp(e^{k+\frac{1}{2}})$ & $1$ & $e^k$ & $\leq 2^{2R}$ \\ 
\hline
(2) & $k^2$ & $k^2 + k$ & $1$ & $\log k$ & \\
\hline
(3) & $\log k$ & $\log(k + \frac{1}{2})$ & $(k^2 \log k)^{-1}$ & $k^{-2}$
& $\geq \exp(e^{\frac{R}{4}})-\nu$ \\
\hline 
\end{tabular}
\end{center}
\end{table}

\subsection{A key proposition}
\label{subsection:key}
In this subsection, we prove a key proposition (Proposition \ref{prop:key-ineq}) which will be applied to:
\begin{itemize}
    \item 
    Determine the properness of the $\Gamma_{\nu}(a_{-},a_{+},r,R)$-action on $\mathrm{AdS}^{3}$ (Proposition \ref{proper});
    \item 
    Determine the sharpness of the $\Gamma_{\nu}(a_{-},a_{+},r,R)$-action on $\mathrm{AdS}^{3}$ in the sense of Kassel-Kobayashi \cite{KaKob16}(Proposition \ref{non-sharp});
    \item
    Estimate an upper bound of the counting $N_{\Gamma_{\nu}(a_{-},a_{+},r,R)}(x,R)$ (Theorem \ref{countingabove} \ref{item:countingabove}).
\end{itemize}

For this, in addition to Assumptions \ref{ass:assume1} and \ref{ass:assume2} in Proposition \ref{prop:discrete-free}, we need to impose another condition on a quadruple $(a_{-},a_{+},r,R)$ of positive real valued sequences.
For $\nu\in\mathbb{N}$, we set 
\begin{align}
\label{def-eta}
\eta(\nu):=\sup \left\{\left| \frac{R(k)}{a_{\delta}(k) - a_{\epsilon}(\ell)} \right| 
\middle|\ \delta,\epsilon \in\{+,-\} \text{ and } k,\ell\geq \nu \text{ s.t.\ }
(k,\delta)\neq(\ell,\epsilon)\right\}.
\end{align}
From now on, we always assume the following:
\begin{ass}
    \label{ass:assume3}
    $\displaystyle \lim_{\nu\to\infty} \eta(\nu)=0$.
\end{ass}

Furthermore, we introduce a constant $\varepsilon(\nu)$.
Let $(a_{-},a_{+},r,R)$ be a quadruple of positive real valued sequences satisfying Assumptions \ref{ass:assume1}--\ref{ass:assume3}.
Let $(\alpha_{k},\beta_{k}) \in G = \mathrm{SL}(2,\mathbb{R})\times \mathrm{SL}(2,\mathbb{R})$ be the sequence of the elements
associated to the quadruple $(a_{-},a_{+},r,R)$ by (\ref{alpha-beta-def}).
For $\nu\in\mathbb{N}$, we put
\begin{align}
\label{varepsilon-def}
\varepsilon(\nu):=\max\left\{\max_{k\geq\nu}
\left\{\frac{24R(k)}{a_{-}(k)},\ \frac{6(R(k)^{2}+1)}{(a_{-}(k)-R(k))^{2}}\right\},\
8\eta(\nu)\right\}.
\end{align}
The sequence $\varepsilon= \{\varepsilon(\nu)\}_{\nu\in\mathbb{N}}$
is monotone decreasing. 
We claim 
\[
\lim_{\nu\to\infty}\varepsilon(\nu)=0.
\]
To see this, we note
\begin{align}
\label{R/a}
\lim_{k\to\infty} \frac{R(k)}{a_{+}(k)}=0,\ \lim_{k\to\infty} \frac{R(k)}{a_{-}(k)}=0.
\end{align}
In fact, for sufficiently large $k\in\mathbb{N}$,
the positive valued sequences 
$a_{-}=\{a_{-}(k)\}_{k\geq \nu}$ and $a_{+}=\{a_{+}(k)\}_{k\geq \nu}$ are monotone increasing by Assumption \ref{ass:assume1}.
Hence we get (\ref{R/a}) by Assumption \ref{ass:assume3}.
Therefore, $\lim_{\nu\to\infty}\varepsilon(\nu)=0$ follows 
from (\ref{R/a}) and Assumptions \ref{ass:assume2},\ref{ass:assume3}.

This subsection is devoted to proving the following proposition:
\begin{prop}
\label{prop:key-ineq}
Given a quadruple $(a_{-},a_{+},r,R)$ 
satisfying Assumptions \ref{ass:assume1}--\ref{ass:assume3}
and $\nu\in\mathbb{N}$,
let $j$, $\rho$, and $F_{\nu}^{\infty}$ be as in Notation \ref{notation-j-rho-F}.
Suppose that the inequalities \eqref{ineq:assume1} hold
for any integer $k\geq \nu-1$ and that $\varepsilon(\nu)<1$.
Let $\gamma\neq e$ be an arbitrary element of $F_{\nu}^{\infty}$
and $m:=\ell(\gamma)$ the word length of $\gamma$.
We write $\gamma=\gamma_{k_{1}}^{s_{1}}\ldots\gamma_{k_{m}}^{s_{m}}$ 
for the reduced expression where
$s_{1},\ldots,s_{m}\in\{1,-1\}$ and $k_{1},\ldots,k_{m}\geq \nu$.
Then the following inequalities hold:
\begin{enumerate}[label=(\arabic*)]
\item \label{prop:diffofmu}
$\displaystyle \left| \| j(\gamma) \| - \| \rho(\gamma) \| - 2\sum_{i=1}^{\ell(\gamma)} 
\log\frac{R(k_i)}{r(k_i)} \right| 
\leq \ell(\gamma)\varepsilon(\nu)$.
\item \label{prop:upper-bound-j}
$\displaystyle \|j(\gamma)\|\leq \left(\sum_{i=1}^{\ell(\gamma)}2\log\frac{a_{+}(k_{i})a_{-}(k_{i})}{r(k_{i})}\right) +
\ell(\gamma)\varepsilon(\nu)$.
\end{enumerate}
\end{prop}

\begin{rem}
Gu\'{e}ritaud-Kassel{ \cite[Sect.\ 10.1]{GuKa17}} gave 
an upper bound of 
\[
\left| \| j(\gamma) \| - \| \rho(\gamma) \| - 2\sum_{i=1}^{\ell(\gamma)} 
\log\frac{R(k_i)}{r(k_i)} \right|
\]
for the quadruple $(a_{-}(k),a_{+}(k),r(k),R(k)) = (k^2,k^2 + k,1,\log k)$,
see Table \ref{table-quadruple} (2).
However, since the explanation given there was not clear to the author,
we take an alternative approach to prove \ref{prop:diffofmu} in our general setting 
where a quadruple $(a_{-},a_{+},r,R)$ is arbitrary 
subject to Assumptions \ref{ass:assume1}--\ref{ass:assume3}.
\end{rem}

We prepare some results needed for the proof of Proposition \ref{prop:key-ineq}.
\begin{lem}
\label{varepsilon-property}
Let $(a_{-},a_{+},r,R)$ and $\nu\in\mathbb{N}$ be as in Proposition \ref{prop:key-ineq}
and $B_{k}^{\pm}$ the half-disks defined in (\ref{half-disks}).
Then, the following assertions hold:
\begin{enumerate}[label=$(\arabic*)$]
    \item \label{varepsilon-property-a-R}
    $a_{-}(k)>\max\{1,R(k)\}$ for any integer $k\geq \nu$.
    \item \label{varepsilon-property-i-B}
    $\displaystyle \sqrt{-1}\notin
   \bigcup_{k\geq\nu}(B_{k}^{-}\cup B_{k}^{+})$ in the upper-half plane $\mathbb{H}^{2}$.
    \item \label{varepsilon-property-lower-bound}
    The following inequalities hold:
    \begin{enumerate}[label=$(3\alph*)$]
    \item 
    $\displaystyle \varepsilon(\nu)\geq \max_{k\geq\nu}\left\{
    6\log\left(1+e^{-2d_{\mathbb{H}^{2}}(B_{k}^{+},\sqrt{-1})}\right),\ 6\log\left(1+e^{-2d_{\mathbb{H}^{2}}(B_{k}^{-},\sqrt{-1})}\right)\right\}$.
    \item $\displaystyle \varepsilon(\nu)\geq 
    \max_{k\geq\nu}\left\{12\left|\log\left(1+\frac{ R(k)}{a_{-}(k)}\right)\right|,\ 12\left|\log\left(1-\frac{ R(k)}{a_{-}(k)}\right)\right|\right\}$.
    \item 
    $\displaystyle \varepsilon(\nu)\geq
    \max_{k\geq\nu}\left\{6\log\left(1+\frac{R(k)^{2}+1}{(a_{-}(k)-R(k))^{2}}\right)\right\}$.
    \item 
    $\displaystyle \varepsilon(\nu)\geq 
    \max\left\{4|\log(1+\eta(\nu))|,\ 
    4|\log(1-\eta(\nu))|\right\}$.
    \end{enumerate}
\end{enumerate}
\end{lem}

\begin{proof}
Take any integer $\nu$ 
such that the inequalities \eqref{ineq:assume1} hold for any integer $k\geq \nu-1$ and that $\varepsilon(\nu)<1$. 
Then, by $\varepsilon(\nu)<1$, we have $a_{-}(k)>24R(k)>3R(k)$ and $a_{-}(k)-R(k)>\sqrt{6(R(k)^{2}+1)}>\sqrt{6}>2$. Thus we get
\begin{align}
a_{-}(k) &>3R(k) \label{ineq:a-R-1} \\
a_{-}(k)-R(k)&>2. \label{ineq:a-R-2}
\end{align}
Using the inequality \eqref{ineq:a-R-2}, 
we can immediately deduce the assertion \ref{varepsilon-property-a-R}, 
and \ref{varepsilon-property-i-B} follows obviously from \ref{varepsilon-property-a-R}.

To prove the assertion \ref{varepsilon-property-lower-bound}, we apply the inequalities $t\geq \log(1+t)$ and $2s \geq |\log (1-s)|$ for $t\geq 0$ and $0\leq s\leq\frac{1}{2}$. These inequalities imply $(3b)$--$(3d)$ since we have $\displaystyle \frac{R(k)}{a_{-}(k)}<\frac{1}{2}$ (from \eqref{ineq:a-R-1}) and $\displaystyle \eta(\nu)(<\frac{\varepsilon(\nu)}{8})<\frac{1}{2}$.

Let $\delta\in\{+,-\}$.
To prove the inequality $(3a)$, we claim 
\begin{align}
\label{varepsilon-property-step1}
e^{-2d_{\mathbb{H}^{2}}(B_{k}^{\delta},\sqrt{-1})}< \frac{R(k)}{a_{-}(k)}.
\end{align}
If we could show this claim, then the inequality $(3a)$ is proved 
again by the inequality $t\geq \log(1+t)$ for $t\geq 0$.

We now prove (\ref{varepsilon-property-step1}).
Again by the inequalities \eqref{ineq:a-R-1} and \eqref{ineq:a-R-2}, we have
\begin{align}
\label{varepsilon-property-step2}
(a_{-}(k)-R(k))^{2}>2(a_{-}(k)-R(k))>a_{-}(k)+R(k).
\end{align}
We write $x_{\delta}(k)+\sqrt{-1}y_{\delta}(k)\in\mathbb{H}^{2}$ for 
the closest point of $B_{k}^{\delta}$ to $\sqrt{-1}$ with respect to the hyperbolic distance.
Obviously, we have
\[x_{\delta}(k)\geq a_{\delta}(k)-R(k),\ 
y_{\delta}(k)\leq R(k).
\]
Thus, by Lemma \ref{hypdist}, 
\[
2\cosh d_{\mathbb{H}^{2}}(B_{k}^{\delta},\sqrt{-1})
>\frac{x_{\delta}(k)^{2}}{y_{\delta}(k)}
\geq\frac{(a_{\delta}(k)-R(k))^{2}}{R(k)}
\geq \frac{(a_{-}(k)-R(k))^{2}}{R(k)},
\]
where the last inequality follows from Assumption \ref{ass:assume1}.
Hence,
since $e^{-d_{\mathbb{H}^{2}}(B_{k}^{\delta},\sqrt{-1})}\leq 1$, we have
\[
e^{d_{\mathbb{H}^{2}}(B_{k}^{\delta},\sqrt{-1})}
>\frac{(a_{-}(k)-R(k))^{2}}{R(k)}-1
>\frac{a_{-}(k)}{R(k)}
\]
where the second inequality follows from (\ref{varepsilon-property-step2}).
Therefore,
\[
e^{-2d_{\mathbb{H}^{2}}(B_{k}^{\delta},\sqrt{-1})}
\leq e^{-d_{\mathbb{H}^{2}}(B_{k}^{\delta},\sqrt{-1})}
<\frac{R(k)}{a_{-}(k)}.
\]
This proves (\ref{varepsilon-property-step1}) and thus the lemma holds.
\end{proof}

\begin{lem}
\label{real-im}
In the setting of Proposition \ref{prop:key-ineq},
the following assertions hold for both $\varphi=j$ or $\varphi=\rho$:
\begin{enumerate}[label=(\arabic*)]
\item
\label{real-im-item:i-B}
$\varphi(\gamma)\sqrt{-1}\in B_{k_{1}}^{s_{1}}$.
\item
\label{real-im-item:phi-d}
$\|\varphi(\gamma)\|\geq d_{\mathbb{H}^{2}}(B_{k_{1}}^{s_{1}},\sqrt{-1})$.
\item\label{real-im-item:re-a}
$\mathrm{Re}(\varphi(\gamma)\sqrt{-1})\geq a_{-}(k_{1})-R(k_{1})$.
\item\label{real-im-item:im-R}
$\mathrm{Im}(\varphi(\gamma)\sqrt{-1})\leq R(k_{1})$.
\item 
\label{real-im-item:log-cosh}
$\displaystyle \left|\|\varphi(\gamma)\|-\log(2\cosh\|\varphi(\gamma)\|)\right|
\leq\frac{1}{6}\varepsilon(\nu)$. 

\item\label{real-im-item:re-j/rho}
$\displaystyle\left|2\log \frac{\mathrm{Re}(j(\gamma)\sqrt{-1})}{
\mathrm{Re}(\rho(\gamma)\sqrt{-1})}\right|\leq
\frac{1}{3}\varepsilon(\nu)$.
\item\label{real-im-item:im-j/rho}
$\displaystyle
\left|-\log \frac{\mathrm{Im}(j(\gamma)\sqrt{-1})}{\mathrm{Im}(\rho(\gamma)\sqrt{-1})}
-2\sum_{i=1}^{m}\log \frac{R(k_{i})}{r(k_{i})}\right|\leq (\ell(\gamma)-1)\varepsilon(\nu)$.
\end{enumerate}
Here $\ell(\gamma)$ is the word length of $\gamma$, and 
we have used the convention that $B_{k}^{\pm1}=B_{k}^{\pm}$ and $a_{\pm1}=a_{\pm}$.
\end{lem}

\begin{proof}
By Lemma \ref{varepsilon-property} \ref{varepsilon-property-a-R},
we note that $\sqrt{-1}\notin \bigcup_{k\geq\nu}(B_{k}^{-}\cup B_{k}^{+})$. 
Hence the assertion \ref{real-im-item:i-B} follows from Corollary \ref{cor:discrete-free}, and thus we get \ref{real-im-item:re-a} and \ref{real-im-item:im-R}.
By Lemma \ref{hypdist}, \ref{real-im-item:phi-d} follows from \ref{real-im-item:i-B}.

We have
\begin{align}
\left|\|\varphi(\gamma)\|-\log(2\cosh\|\varphi(\gamma)\|)\right|
&=\log\left(1+e^{-2\|\varphi(\gamma)\|}\right)\nonumber\\
&\leq\log\left(1+e^{-2d_{\mathbb{H}^{2}}(B_{k_{1}}^{s_{1}},\sqrt{-1})}\right)
\leq\frac{1}{6}\varepsilon(\nu),
\end{align}
where the second and third inequalities follow from
Lemmas \ref{real-im} (2) and \ref{varepsilon-property}, respectively.
This proves \ref{real-im-item:log-cosh}.

By \ref{real-im-item:i-B}, we get
$\left| \mathrm{Re}(\varphi(\gamma)\sqrt{-1})-a_{s_{1}}(k_{1})\right|\leq R(k_{1})$.
Thus noting $a_{s_{1}}(k_{1})>R(k_{1})$ by $\varepsilon(\nu)<1$,
we have
\begin{align}
\label{re-phi}
\left|\log \frac{\mathrm{Re}(\varphi(\gamma)\sqrt{-1})}{a_{s_{1}}(k_{1})}\right|
\leq \max_{\delta=\pm1}\left\{\left|\log\left(1+\frac{\delta R(k_{1})}{a_{s_{1}}(k_{1})}\right)\right|\ \right\} 
\leq \frac{1}{12}\varepsilon(\nu)
\end{align}
by Lemma \ref{varepsilon-property}. 
Hence \ref{real-im-item:re-j/rho} follows.

In the following, we set for $k\in\mathbb{N}$
\begin{align*}
r_{\varphi}(k):=
\begin{cases}
r(k) & \text{ for } \varphi=j, \\
R(k) & \text{ for } \varphi=\rho.
\end{cases}
\end{align*}
Then, by (\ref{tau-property}),
\begin{align}
\label{im}
\mathrm{Im}(\varphi(\gamma_{k}^{s})z)=\frac{r_{\varphi}(k)^{2}\mathrm{Im}z}{|z-a_{-s}(k)|^{2}}
\text{ for } z\in\mathbb{H}^{2},\ k\in\mathbb{N},\text{ and }s=\pm 1.
\end{align}

Let us prove \ref{real-im-item:im-j/rho}. Define 
$\sigma_{0},\ldots,\sigma_{m}\in F_{\nu}^{\infty}$ 
by $\sigma_{i}=\gamma_{k_{i+1}}^{s_{i+1}}\ldots\gamma_{k_{m}}^{s_{m}}$
for $0\leq i<m$
and $\sigma_{m}=1$.
We note  $\sigma_{0}=\gamma$.
For $0\leq i\leq m$, we set
\begin{align*}
Q(\sigma_{i}):=
\frac{\mathrm{Im}(j(\sigma_{i})\sqrt{-1})}{\mathrm{Im}(\rho(\sigma_{i})\sqrt{-1})},\ 
D(\varphi(\sigma_{i})):=\left|\varphi(\sigma_{i})\sqrt{-1}-a_{-s_{i}}(k_{i})\right|.
\end{align*}
We claim:
\[
\left|\log \frac{Q(\sigma_{i})}{Q(\sigma_{i+1})}+2\log\frac{R(k_{i+1})}{r(k_{i+1})}\right|
= \left|2\log \frac{D(j(\sigma_{i+1}))}{D(\rho(\sigma_{i+1}))}\right|
\text{ for } 0\leq i<m,
\tag{$\mathrm{Q}_{i}$}
\]
\[
\left|2\log \frac{D(j(\sigma_{i}))}{D(\rho(\sigma_{i}))}\right|
\leq 
\begin{cases}
\varepsilon(\nu) & \text{ if } 0< i<m, \\
0 & \text{ if } i=m.
\end{cases}
\tag{$\mathrm{D}_{i}$}
\]
This claims imply \ref{real-im-item:im-j/rho}.
Indeed, we get \ref{real-im-item:im-j/rho} by summing up $(\mathrm{Q}_{i})$ and $(\mathrm{D}_{i})$ for all $i$
because $\log Q(\sigma_{m})=0$.

It remains to verify $(\mathrm{Q}_{i})$ and $(\mathrm{D}_{i})$.
Because $\sigma_{i}=\gamma_{k_{i+1}}^{s_{i+1}}\sigma_{i+1}$, we have
\begin{align*}
\mathrm{Im}(\varphi(\sigma_{i})\sqrt{-1})
=\frac{r_{\varphi}(k_{i+1})^{2}\mathrm{Im}(\varphi(\sigma_{i+1})\sqrt{-1})}
{D(\varphi(\sigma_{i+1}))^{2}}
\tag{$\mathrm{I}_{i}$}
\end{align*}
for $0\leq i <m$ by (\ref{im}). Thus ($\mathrm{Q}_{i}$) follows from the formula
\begin{align*}
\frac{Q(\sigma_{i})}{Q(\sigma_{i+1})}
=\left(\frac{D(\rho(\sigma_{i+1}))}{D(j(\sigma_{i+1}))}\right)^{2}
\left(\frac{r(k_{i+1})}{R(k_{i+1})}\right)^{2}.
\end{align*}

We observe that 
$(\mathrm{D}_{i})$ for $i=m$ is obvious because 
$D(j(\sigma_{m}))=D(\rho(\sigma_{m}))=|\sqrt{-1}-a_{-s_{m}}(k_{m})|$.
For $0<i<m$, by the triangle inequality, we have
\begin{align*}
\left|D(\varphi(\sigma_{i}))-\left|a_{s_{i+1}}(k_{i+1})-a_{-s_{i}}(k_{i})\right|\right|\leq
\left|\varphi(\sigma_{i})\sqrt{-1}-a_{s_{i+1}}(k_{i+1})\right|
\leq R(k_{i+1}) 
\end{align*}
since $\varphi(\sigma_{i})\sqrt{-1}\in B_{k_{i+1}}^{s_{i+1}}$ by \ref{real-im-item:i-B}.
Hence noting 
\[
|a_{s_{i+1}}(k_{i+1})-a_{-s_{i}}(k_{i})|\geq\eta(\nu)^{-1}R(k_{i+1})>R(k_{i+1})
\]
by $\varepsilon(\nu)<1$,
we obtain
\begin{align*}
\left|\log \frac{D(\varphi(\sigma_{i}))}{
\left|a_{s_{i+1}}(k_{i+1})-a_{-s_{i}}(k_{i})\right|}\right| 
&\leq
\max_{\delta=\pm1}\left\{\left|
\log\left(1+\frac{\delta R(k_{i+1})}{\left|a_{s_{i+1}}(k_{i+1})-a_{-s_{i}}(k_{i})
\right|}\right)\right|\right\} \nonumber \\
&\leq\max_{\delta=\pm1}\left\{\left|\log(1+\delta
\eta(\nu))\right|\right\} \nonumber \leq\frac{1}{4}\varepsilon(\nu).
\tag{$\mathrm{D}'_{i}$}
\end{align*}
The second and third inequalities follow from 
the definition (\ref{def-eta}) of $\eta(\nu)$ and 
Lemma \ref{varepsilon-property}, respectively.
Hence $\left|\log \frac{D(j(\sigma_{i}))}{D(\rho(\sigma_{i}))}\right|
\leq\frac{\varepsilon(\nu)}{2}$.
Thus ($\mathrm{D}_{i}$) holds for all $i$ and the proof of \ref{real-im-item:im-j/rho} is completed.
\end{proof}
We are ready to prove Proposition \ref{prop:key-ineq}.
\begin{proof}[Proof of Proposition \ref{prop:key-ineq}]
\begin{enumerate}[label=$(\arabic*)$]
\item 
Let $\varphi\colon F^{\infty}_{\nu}\rightarrow \mathrm{SL}(2,\mathbb{R})$ be either $j$ or $\rho$.

By Lemma \ref{real-im} \ref{real-im-item:log-cosh}, 
we get
\begin{align}
\label{step1}
\left|\|j(\gamma)\|-\|\rho(\gamma)\|-\log\frac{\cosh\|j(\gamma)\|}
{\cosh\|\rho(\gamma)\|}\right|
\leq\frac{1}{3}\varepsilon(\nu).
\end{align}

By Lemma \ref{hypdist},
\[
2\cosh\|\varphi(\gamma)\|=\frac{\mathrm{Re}(\varphi(\gamma)\sqrt{-1})^{2}+\mathrm{Im}(\varphi(\gamma)\sqrt{-1})^{2}+1}
{\mathrm{Im}(\varphi(\gamma)\sqrt{-1})}.
\] 
Therefore, we have
\begin{align}
\label{phi-re-im}
\begin{split}
\left|\log(2\cosh\|\varphi(\gamma)\|)-
2\log \mathrm{Re}(\varphi(\gamma)\sqrt{-1})
+\log \mathrm{Im}(\varphi(\gamma)\sqrt{-1})\right|\\
=
\log\left(1+\frac{\mathrm{Im}(\varphi(\gamma)\sqrt{-1})^2+1}
{\mathrm{Re}(\varphi(\gamma)\sqrt{-1})^2}\right)
\leq
\log\left(1+\frac{R(k_{1})^{2}+1}{(a_{-}(k_{1})-R(k_{1}))^{2}}\right)
\leq\frac{1}{6}\varepsilon(\nu).
\end{split}
\end{align}
The second and third inequalities follow from Lemma \ref{real-im} \ref{real-im-item:re-a}, \ref{real-im-item:im-R}
and Lemma \ref{varepsilon-property}, respectively.
Hence  
\begin{align}
\label{step2}
\left|\log\frac{\cosh\|j(\gamma)\|}
{\cosh\|\rho(\gamma)\|}
-2\log \frac{\mathrm{Re}(j(\gamma)\sqrt{-1})}{
\mathrm{Re}(\rho(\gamma)\sqrt{-1})}
+\log \frac{\mathrm{Im}(j(\gamma)\sqrt{-1})}{
\mathrm{Im}(\rho(\gamma)\sqrt{-1})}\right|
\leq\frac{1}{3}\varepsilon(\nu).
\end{align}
Summing up Lemma \ref{real-im} \ref{real-im-item:re-j/rho}, \ref{real-im-item:im-j/rho},
(\ref{step1}), and (\ref{step2}), we obtain
\ref{prop:diffofmu}.

\item 
We have 
\begin{align}
\label{upper-bound-j-step1}
\|j(\gamma)\|\leq 2\log \mathrm{Re}(j(\gamma)\sqrt{-1})
-\log \mathrm{Im}(j(\gamma)\sqrt{-1}) +\frac{1}{3}\varepsilon(\nu)
\end{align}
by summing up \ref{real-im-item:log-cosh} of Lemma \ref{real-im} and (\ref{phi-re-im}) for $\varphi=j$.
By (\ref{re-phi}), we have
\begin{align}
\label{upper-bound-j-step2}
2\log \mathrm{Re}(j(\gamma)\sqrt{-1})\leq 2\log a_{s_{1}}(k_{1}) + \frac{1}{6}\varepsilon(\nu).
\end{align}

We claim:
\begin{align}
\label{upper-bound-j-step3}
-\log \mathrm{Im}(j(\gamma)\sqrt{-1})+2\log a_{s_{1}}(k_{1})
\leq \left(\sum_{i=1}^{m}2\log\frac{a_{+}(k_{i})a_{-}(k_{i})}{r(k_{i})}\right)
+ \frac{m}{2}\varepsilon(\nu).
\end{align}
If we could show this claim, then \ref{prop:upper-bound-j} is proved by summing up 
(\ref{upper-bound-j-step1}), (\ref{upper-bound-j-step2}), and (\ref{upper-bound-j-step3})
since $m=\ell(\gamma)\geq1$.

It remains to verify (\ref{upper-bound-j-step3}).
As in the proof of Lemma \ref{real-im},
$\gamma=\gamma_{k_{1}}^{s_{1}}\cdots\gamma_{k_{m}}^{s_{m}}\in F_{\nu}^{\infty}$
defines a sequence of elements $\sigma_{0},\ldots,\sigma_{m}\in F_{\nu}^{\infty}$ 
by $\sigma_{i}=\gamma_{k_{i+1}}^{s_{i+1}}\ldots\gamma_{k_{m}}^{s_{m}}$
for $0\leq i<m$ with $\sigma_{0}=\gamma$ and $\sigma_{m}=1$.
We set $D(j(\sigma_{i})):=\left|j(\sigma_{i})\sqrt{-1}-a_{-s_{i}}(k_{i})\right|$.
Multiplying all ($\mathrm{I}_{i}$) for $0\leq i< m$, with $\varphi=j$, we get 
$\mathrm{Im}(j(\gamma)\sqrt{-1})=\prod_{i=1}^{m}(r(k_{i})/D(j(\sigma_{i})))^{2}$. Hence
\begin{align}
\label{upper-bound-j-step3-step0}
-\log\mathrm{Im}(j(\gamma)\sqrt{-1})=
2\sum_{i=1}^{m}\left(\log D(j(\sigma_{i}))-\log r(k_{i})\right).
\end{align} 
Since $\sigma_{m}=1$, we have 
\begin{align}
2\left(\log D(j(\sigma_{m}))-\log a_{-s_{m}}(k_{m})\right)&
=\log\left(1+\frac{1}{a_{-s_{m}}(k_{m})^{2}}\right)
\leq \frac{1}{a_{-s_{m}}(k_{m})^{2}}  \nonumber\\
&\leq \frac{3(R(k_{m})^{2}+1)}{(a_{-}(k_{m})-R(k_{m}))^{2}}
\leq \frac{1}{2}\varepsilon(\nu), \label{upper-bound-j-step3-step1}
\end{align}
where the third and fourth inequalities follow 
from Assumption \ref{ass:assume1} and
the definition (\ref{varepsilon-def}) of $\varepsilon(\nu)$,
respectively. 

Note $|s-t|\leq st$ for $s,t>1$ and
$a_{+}(k)>a_{-}(k)>1$ for all $k\geq\nu$
by \eqref{ineq:assume1} and Lemma \ref{varepsilon-property} \ref{varepsilon-property-a-R}.
Thus, by ($\mathrm{D}'_{i}$) for $1\leq i<m$, we have
\begin{align}
2\log D(j(\sigma_{i})) &\leq 2\log|a_{s_{i+1}}(k_{i+1})-a_{-s_{i}}(k_{i})|+\frac{1}{2}\varepsilon(\nu)
\nonumber \\
&\leq 2(\log a_{s_{i+1}}(k_{i+1})+\log a_{-s_{i}}(k_{i}))+\frac{1}{2}\varepsilon(\nu).
\label{upper-bound-j-step3-step2}
\end{align}
Now the inequality (\ref{upper-bound-j-step3}) follows from 
(\ref{upper-bound-j-step3-step0}),
(\ref{upper-bound-j-step3-step1}), and (\ref{upper-bound-j-step3-step2}).
Thus the proof of \ref{prop:upper-bound-j} is completed. 
\end{enumerate}
\end{proof}

\subsection{Proper discontinuity of the action of 
\texorpdfstring{$\Gamma_{\nu}(a_{-},a_{+},r,R)$}{}}

\label{subsection:proper}
In this section we give a necessary and sufficient condition for $\Gamma_{\nu}(a_{-},a_{+},r,R)$
to act properly discontinuously on $\mathrm{AdS}^{3}$.

The action of the Schottky-like discrete group $\Gamma_{\nu}(a_{-},a_{+},r,R)$ on 
$\mathrm{AdS}^{3}$ is not always properly discontinuous. We
give a necessary and sufficient condition for
this action to be properly discontinuous:
\begin{prop}
\label{proper}
Let $(a_{-},a_{+},r,R)$ be a quadruple of sequences as in Proposition \ref{prop:key-ineq}.
The action of $\Gamma_{\nu}(a_{-},a_{+},r,R)$ on $\mathrm{AdS}^{3}$ 
is properly discontinuous for sufficiently large $\nu\in\mathbb{N}$ if and only if
\begin{align}
\label{R/r}
\lim_{k \to \infty}\frac{R(k)}{r(k)} = \infty.
\end{align}
In this case, take $\nu\in\mathbb{N}$ as in Proposition \ref{prop:key-ineq} and assume that $\log (R(k)/r(k))>1$ for any integer $k\geq \nu$.
Then the action of $\Gamma_{\nu}(a_{-},a_{+},r,R)$
is properly discontinuous.
\end{prop}

Postponing the proof of Proposition \ref{proper},
we state its immediate consequences in Proposition \ref{Gamdisc}
and Lemma \ref{lem-p(x)} as below. First, 
since the group $\Gamma_{\nu}(a_{-},a_{+},r,R)$ 
is torsion-free, any properly discontinuous action is free, hence
we obtain:
\begin{prop}
\label{Gamdisc}
Let a quadruple $(a_{-},a_{+},r,R)$ and $\nu\in\mathbb{N}$ be as in Proposition \ref{proper}.
Assume the condition (\ref{R/r}). Then $\Gamma_{\nu}(a_{-},a_{+},r,R)$ 
is a discontinuous group for $\mathrm{AdS}^{3}$.
\end{prop}

\begin{ex}
All $(a_{-},a_{+},r,R)$ in Table \ref{table-quadruple}
apply to Proposition \ref{Gamdisc}.
\end{ex}

Let us prove Proposition \ref{proper}.

\begin{proof}[Proof of Proposition \ref{proper}]
Recall $\Gamma_{\nu}(a_{-},a_{+},r,R) = 
\{(j(\gamma), \rho(\gamma)) \mid \gamma \in F^{\infty}_{\nu}\}$.
The Kobayashi-Benoist properness criterion (Fact \ref{KoBe}) tells us that
$\Gamma_{\nu}(a_{-},a_{+},r,R)$ acts properly discontinuously on $\mathrm{AdS}^{3}$
if and only if 
\[
\forall C>0,\ \#\{\gamma\in F^{\infty}_{\nu}\mid |\|j(\gamma)\|-\|\rho(\gamma)\||<C\}
<\infty.
\]

We suppose that the action of $\Gamma_{\nu}(a_{-},a_{+},r,R)$ on $\mathrm{AdS}^{3}$
is properly discontinuous for any sufficiently large $\nu\in\mathbb{N}$.
Then $\lim_{k\to\infty}\left|\|j(\gamma_{k})\|-\|\rho(\gamma_{k})\|\right|=\infty$
by Fact \ref{KoBe}.
By Proposition \ref{prop:key-ineq} \ref{prop:diffofmu},
\[
2\log\frac{R(k)}{r(k)}\geq \left|\|j(\gamma_{k})\|-\|\rho(\gamma_{k})\|\right|-\varepsilon(\nu)
\]
for any $k\geq\nu$ and thus $\lim_{k\to\infty}R(k)/r(k)=\infty$.

Conversely, we suppose $\lim_{k\to\infty}R(k)/r(k)=\infty$.
Take $\nu\in\mathbb{N}$ as in Proposition \ref{prop:key-ineq}
and assume that $\log (R(k)/r(k))>1$ for any integer $k\geq \nu$.
Since $\varepsilon(\nu)<1$,
we note $\|j(\gamma)\|-\|\rho(\gamma)\|\geq 0$ by Proposition \ref{prop:key-ineq} \ref{prop:diffofmu}
for any $\gamma\in F^{\infty}_{\nu}$.
Assume that $\gamma\in F^{\infty}_{\nu}\smallsetminus\{e\}$ satisfies 
$|\|j(\gamma)\|-\|\rho(\gamma)\||<C$.
Let $m:=\ell(\gamma)$ be the word length of $\gamma$ and
we write $\gamma=\gamma_{k_{1}}^{s_{1}}\cdots\gamma_{k_{m}}^{s_{m}}$
for the reduced expression where
$s_{1},\ldots,s_{m}\in\{1,-1\}$ and $k_{1},\ldots,k_{m}\geq \nu$.
By Proposition \ref{prop:key-ineq} \ref{prop:diffofmu},  
\[
\ell(\gamma)>\ell(\gamma)\varepsilon(\nu)
\geq 2\sum_{i=1}^{\ell(\gamma)} \log\cfrac{R(k_i)}{r(k_i)}- 
(\| j(\gamma) \| - \| \rho(\gamma) \|)
> 2\ell(\gamma) - C.
\]
Hence $\ell(\gamma)<C$. Again by Proposition \ref{prop:key-ineq} \ref{prop:diffofmu}, we get
\[
\sum_{i=1}^{\ell(\gamma)} \log\frac{R(k_i)}{r(k_i)}<\frac{1}{2}(
\left(\| j(\gamma) \| - \| \rho(\gamma) \|\right)+ \ell(\gamma)\varepsilon(\nu))
<C
\]
and there are only finitely many $\gamma$ satisfying this inequality.
By Fact \ref{KoBe},
the action of $\Gamma_{\nu}(a_{-},a_{+},r,R)$ on $\mathrm{AdS}^{3}$
is properly discontinuous.
\end{proof}

\subsection{Sharpness of the \texorpdfstring{$\Gamma_{\nu}(a_{-},a_{+},r,R)$}{}-action}

\label{sharpsec}
The notion of sharpness was introduced in Kassel-Kobayashi \cite{KaKob16}, 
although the idea was already implicit
in \cite{Kob98}.
It is defined for a general homogeneous space of reductive type.
However, in this section, 
we explain it only for $\mathrm{AdS}^{3}$.
Moreover, we find a necessary and sufficient condition that the discontinuous group 
$\Gamma_{\nu}(a_{-},a_{+},r,R)$
for $\mathrm{AdS}^{3}$ is sharp.

The Cartan projection $\mu$ for the direct product group 
$G=\mathrm{SL}(2,\mathbb{R})\times\mathrm{SL}(2,\mathbb{R})$
is given by $\mu(g)=(\|\alpha\|,\|\beta\|)$ for $g=(\alpha,\beta)\in G$,
where we recall that $\|\cdot\|$ is the pseudo-distance in $\mathrm{SL}(2,\mathbb{R})$.
By Fact~\ref{KoBe}, a discrete subgroup 
$\Gamma$ of $G$ acts properly discontinuously on $\mathrm{AdS}^{3}$
if and only if $\mu(\Gamma)$ ``goes away from the line $x = y$ at infinity''.
The condition of  sharpness is stronger than the condition of proper discontinuity as 
Definition~\ref{sharp} below, and $\Gamma$ is sharp for $\mathrm{AdS}^{3}$
if $\mu(\Gamma)$ ``goes away from the line $x = y$ at infinity'' 
with a speed that is at least linear (\cite[p.\ 152]{KaKob16}) 
as in Figure \ref{fig-proper-sharp}.
See also \cite[Ch.\ 4, Fig.\ 1]{KaKob16} for the illustration of sharp actions in the general setting.

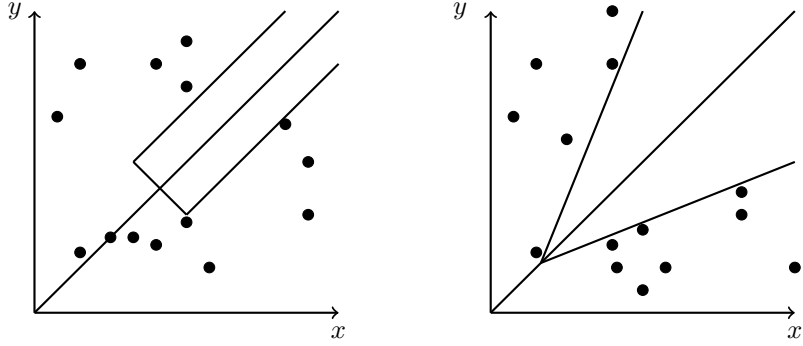
\begin{figure}
\begin{tikzpicture}[scale=1.0]

\draw [thick,->] (0.0,0.0) -- (4.0,0.0);
\node at (4.0,-0.26) {$x$};
\draw [thick,->] (0.0,0.0) -- (0.0,4.0);
\node at (-0.26,4.0) {$y$};

\draw [thick,->] (6.0,0.0) -- (10.0,0.0);
\node at (10.0,-0.26) {$x$};
\draw [thick,->] (6.0,0.0) -- (6.0,4.0);
\node at (5.6,4.0) {$y$};

\draw [thick,-] (0.0,0.0) -- (4.0,4.0);
\draw [thick,-] (1.3,2.0) -- (3.3,4.0);
\draw [thick,-] (2.0,1.3) -- (4.0,3.3);
\draw [thick,-] (2.0,1.3) -- (1.3,2.0);

\draw [thick,-] (6.0,0.0) -- (10.0,4.0);
\draw [thick,-] (6.66,0.66) -- (8.0,4.0);
\draw [thick,-] (6.66,0.66) -- (10.0,2.0);

\filldraw(1.0,1.0)circle (2pt);
\filldraw(1.3,1.0)circle (2pt);
\filldraw(2.0,3.0)circle (2pt);
\filldraw(3.6,1.3)circle (2pt);
\filldraw(3.6,2.0)circle (2pt);
\filldraw(3.3,2.5)circle (2pt);
\filldraw(2.3,0.6)circle (2pt);
\filldraw(1.6,3.3)circle (2pt);
\filldraw(2.0,3.6)circle (2pt);
\filldraw(1.6,0.9)circle (2pt);
\filldraw(0.6,0.8)circle (2pt);
\filldraw(2.0,1.2)circle (2pt);
\filldraw(0.6,3.3)circle (2pt);
\filldraw(0.3,2.6)circle (2pt);

\filldraw(7.0,2.3)circle (2pt);
\filldraw(7.66,0.6)circle (2pt);
\filldraw(8.0,0.3)circle (2pt);
\filldraw(9.3,1.3)circle (2pt);
\filldraw(9.3,1.6)circle (2pt);
\filldraw(10.0,0.6)circle (2pt);
\filldraw(8.3,0.6)circle (2pt);
\filldraw(7.6,3.3)circle (2pt);
\filldraw(7.6,4.0)circle (2pt);
\filldraw(7.6,0.9)circle (2pt);
\filldraw(6.6,0.8)circle (2pt);
\filldraw(8.0,1.1)circle (2pt);
\filldraw(6.6,3.3)circle (2pt);
\filldraw(6.3,2.6)circle (2pt);
\end{tikzpicture}
\caption{Properly discontinuous actions and sharp actions} \label{fig-proper-sharp}
\end{figure}

\begin{defi}[Kassel-Kobayashi {\cite[Def.\ 4.2]{KaKob16}}]
\label{sharp}
Let $c \in (0,1]$ and $C \geq 0$.
A discrete subgroup $\Gamma \subset G$ is called $(c,C)$-sharp for $\mathrm{AdS}^{3}$
if for any $\gamma=(\alpha,\beta) \in \Gamma$,
\begin{align}
\label{ineq:def-sharp}
|\|\alpha\| - \|\beta\|| \geq 
\sqrt{2}(c\sqrt{\|\alpha\|^{2}+\|\beta\|^{2}} - C).
\end{align}
If $\Gamma$ is $(c,C)$-sharp for some $c$ and $C$,
then $\Gamma$ is called sharp for $\mathrm{AdS}^{3}$.
\end{defi}

Kassel \cite[Thm.\ 0.2.13]{Ka09} and Gu\'{e}riataud-Kassel \cite[Thm.\ 1.8]{GuKa17} proved that any
finitely generated discontinuous group $\Gamma \subset G$
for $\mathrm{AdS}^{3}$ is sharp.
However, our discontinuous group
$\Gamma_{\nu}(a_{-},a_{+},r,R)$ is infinitely generated, and actually, 
$\Gamma_{\nu}(a_{-},a_{+},r,R)$ may not be sharp for $\mathrm{AdS}^{3}$.
The next proposition gives a necessary and sufficient condition for the sharpness of the action of 
$\Gamma_{\nu}(a_{-},a_{+},r,R)$ on $\mathrm{AdS}^{3}$ for $\nu\gg 0$:
\begin{prop}
\label{non-sharp}
Let $(a_{-},a_{+},r,R)$ be a quadruple of sequences 
satisfying Assumptions \ref{ass:assume1}--\ref{ass:assume3} and the condition (\ref{R/r}).
We set
\begin{align}
A\equiv A(a_{-},a_{+},r,R):=\liminf_{k \to \infty}\log\left(\frac{R(k)}{r(k)}\right)
\left({\log\frac{a_{-}(k)a_{+}(k)}{r(k)}}\right)^{-1}.
\end{align}
Then $0\leq A\leq 1$. Moreover, take $\nu\in\mathbb{N}$ as in Proposition \ref{proper}.
Then the following hold for the discontinuous group 
$\Gamma_{\nu}\equiv\Gamma_{\nu}(a_{-},a_{+},r,R)$ for $\mathrm{AdS}^{3}$:
\begin{enumerate}[label=$(\arabic*)$]
\item
if $c>A/\sqrt{2(1+(1-A)^{2})}$, then
$\Gamma_{\nu}$ is not $(c,C)$-sharp for any $C\geq 0$;
\item
Assume $0<c<A/\sqrt{2(1+(1-A)^{2})}$. Then
$\Gamma_{\nu}$ is $(c,0)$-sharp
if for any $k\geq \nu$, we have
\begin{align}
\label{ineq:c-sharp}
\frac{B+1}{A-B}<\log\frac{a_{-}(k)a_{+}(k)}{r(k)}
<\frac{2}{A+B}\log\frac{R(k)}{r(k)},
\end{align}
where $B\in(0,A)$ is defined by $c=B/\sqrt{2(1+(1-B)^{2})}$.
\end{enumerate}
In particular, $\Gamma_{\nu}$ is sharp for $\nu\gg 0$ if and only if $A\neq 0$.
\end{prop}

\begin{proof}
Take $\nu\in\mathbb{N}$ as in Proposition \ref{proper}.
Then $R(k)>r(k)$ and $a_{+}(k)>a_{-}(k)$ for any integer $k\geq\nu$
by the inequality \eqref{ineq:assume1}.
Moreover, $a_{-}(k)>\max\{R(k),1\}$ by Lemma \ref{varepsilon-property} \ref{varepsilon-property-a-R}.
Hence we have $a_{-}(k)a_{+}(k)>R(k)>r(k)$ 
and thus 
\begin{align}
\label{ineq:a+a-/r}
0<\log\left(\frac{R(k)}{r(k)}\right)
\left({\log\frac{a_{-}(k)a_{+}(k)}{r(k)}}\right)^{-1}<1.
\end{align}
In particular, we obtain $0\leq A\leq1$.
\begin{enumerate}[label=$(\arabic*)$]
\item
Recall that $\Gamma_{\nu}$ is generated by 
$\{(\alpha_{k},\beta_{k})\mid k=\nu,\nu+1,\ldots\}$.
We claim:
\begin{align}
\label{sharp-claim}
\liminf_{k\to\infty}
\frac{|\|\alpha_{k}\|-\|\beta_{k}\||}{\sqrt{\|\alpha_{k}\|^{2}+\|\beta_{k}\|^{2}}}
\leq 
\frac{A}{\sqrt{1+(1-A)^{2}}}.
\end{align}
If we could show this claim, then (1) is obvious.
Note $\lim_{k\to\infty}\sqrt{\|\alpha_{k}\|^{2}+\|\beta_{k}\|^{2}}=\infty$
since the map $\|\cdot\|\colon\mathrm{SL}(2,\mathbb{R})\rightarrow[0,+\infty]$ is proper
and since
$\{(\alpha_{k},\beta_{k})\mid k=\nu,\nu+1,\ldots\}$ is 
an infinite discrete subset of $G$.

We now prove (\ref{sharp-claim}).
By Proposition \ref{prop:key-ineq} \ref{prop:diffofmu}, for $k\geq \nu$,
\begin{align}
\label{non-sharp-step1}
|\|\alpha_{k}\|-\|\beta_{k}\||\leq 2\log\left(\frac{R(k)}{r(k)}\right)+\varepsilon(\nu).
\end{align}
Since $\alpha_k = r(k)^{-1}\begin{pmatrix}
a_{+}(k) & -(a_{-}(k)a_{+}(k) + r(k)^2) \\
1 & -a_{-}(k)
\end{pmatrix}$ by the definition (\ref{alpha-beta-def}), we have 
\begin{align}
\label{non-sharp-step2}
\|\alpha_{k}\|&\geq \log (2\cosh\|\alpha_{k}\|) -\frac{1}{6}\varepsilon(\nu) \nonumber \\
&\geq 2\log\left(\frac{a_{-}(k)a_{+}(k)}{r(k)}\right) - \frac{1}{6}\varepsilon(\nu) \nonumber \\
&\geq 2\log\left(\frac{a_{-}(k)a_{+}(k)}{r(k)}\right) - \varepsilon(\nu),
\end{align}
where the first and second inequalities follow from 
Lemma \ref{real-im} \ref{real-im-item:log-cosh} and the definition (\ref{def-pdist}), respectively. 
Similarly, we have
\begin{align}
\|\beta_{k}\| &\geq 2\log\left(\frac{a_{-}(k)a_{+}(k)}{R(k)}\right) - \varepsilon(\nu)
\nonumber \\
\label{non-sharp-step3}
&=2\left(\log\left(\frac{a_{-}(k)a_{+}(k)}{r(k)}\right)-\log\left(\frac{R(k)}{r(k)}\right)\right)
- \varepsilon(\nu).
\end{align}

Here we put 
\begin{align*}
a_{k} &:= \log\left(\frac{R(k)}{r(k)}\right)
\left({\log\frac{a_{-}(k)a_{+}(k)}{r(k)}}\right)^{-1}, \\ 
b_{k}&:= \frac{\varepsilon(\nu)}{2}\left({\log\frac{a_{-}(k)a_{+}(k)}{r(k)}}\right)^{-1}.
\end{align*}
Then, by (\ref{non-sharp-step1})--(\ref{non-sharp-step3}),
we have
\[
\liminf_{k\to\infty}
\frac{|\|\alpha_{k}\|-\|\beta_{k}\||}{\sqrt{\|\alpha_{k}\|^{2}+\|\beta_{k}\|^{2}}}
\leq 
\liminf_{k\to\infty}
\frac{a_{k} + b_{k}}
{\sqrt{(1-b_{k})^{2} + (1-a_{k}-b_{k})^{2}}}.
\]
Here $0<a_{k}<1$ by \eqref{ineq:a+a-/r} and 
$\lim_{k\to\infty}b_{k} = 0$ by the condition \eqref{R/r} since $0<b_{k} \leq \varepsilon(\nu)(2\log(R(k)/r(k)))^{-1}$.
Hence, by applying Lemma \ref{lemma:elementary-ineq} below
to $a_{k}$ and $b_{k}$, we obtain
the inequality (\ref{sharp-claim}).
Thus (1) is proved.

\item
Suppose $0<c<A/\sqrt{2(1+(1-A)^{2})}$. 
Take $\nu\in\mathbb{N}$ as in Proposition \ref{proper} 
and assume that the inequalities \eqref{ineq:c-sharp} hold for any integer $k\geq \nu$. 
Then we note $\varepsilon(\nu)<1$. Setting $2\xi:=A-B(>0)$, for any integer $k\geq \nu$, we have
\begin{align}
(B+\xi) \log\frac{a_{-}(k)a_{+}(k)}{r(k)} < \log \frac{R(k)}{r(k)}, \label{non-sharp-step4} \\
\ (B+1)\varepsilon(\nu)<2\xi\log\frac{a_{-}(k)a_{+}(k)}{r(k)} \label{non-sharp-step5},
\end{align}
by \eqref{ineq:c-sharp}. Let $\gamma\neq e$ be an arbitrary element of $F_{\nu}^{\infty}$,
and $m:=\ell(\gamma)$ the word length of $\gamma$. We write
$\gamma=\gamma_{k_{1}}^{s_{1}}\cdots\gamma_{k_{m}}^{s_{m}}$ for the reduced expression
where $s_{1},\ldots,s_{m}\in\{1,-1\}$ and $k_{1},\ldots,k_{m}\geq\nu$.
By Proposition \ref{prop:key-ineq} \ref{prop:upper-bound-j}, we have
\begin{align*}
B\|j(\gamma)\|&\leq 
\sum_{i=1}^{m} \left(2B\log\left(\frac{a_{-}(k_{i})a_{+}(k_{i})}{r(k_{i})}\right)+
B\varepsilon(\nu)\right) \\
&< \sum_{i=1}^{m}\left(2\log\left(\frac{R(k_{i})}{r(k_{i})}\right)
-\left(2\xi\log\left(\frac{a_{-}(k)a_{+}(k)}{r(k)}\right) - B\varepsilon(\nu)\right)\right)\\
&< \sum_{i=1}^{m} \left(2\log\left(\frac{R(k_{i})}{r(k_{i})}\right) - \varepsilon(\nu)\right)
\leq \|j(\gamma)\|-\|\rho(\gamma)\|.
\end{align*}
Here the second, third, and fourth inequalities follow 
from (\ref{non-sharp-step4}), (\ref{non-sharp-step5}),
and Proposition \ref{prop:key-ineq} \ref{prop:diffofmu}, respectively. 
Then $\|\rho(\gamma)\|< (1-B)\|j(\gamma)\|$ and thus
\begin{align*}
\frac{\|j(\gamma)\|-\|\rho(\gamma)\|}{\sqrt{2(\|j(\gamma)\|^{2}+\|\rho(\gamma)\|^{2})}} 
> \frac{B}{\sqrt{2(1+(1-B)^{2})}}
=c. 
\end{align*}
Hence $\Gamma_{\nu}$ is $(c,0)$-sharp, which proves (2).
\end{enumerate}
\end{proof}

In the above proof, we have used the following elementary lemma:
\begin{lem}
\label{lemma:elementary-ineq}
Let $a=\{a_{k}\}_{k\in\mathbb{N}}\in [0,1]^{\mathbb{N}}$ and $b=\{b_{k}\}_{k\in\mathbb{N}}\in [0,\infty]^{\mathbb{N}}$. 
If $\displaystyle \lim_{k\to\infty} b_{k} =0$,
then we have 
\[
\liminf_{k\to\infty} \frac{a_{k} + b_{k}}
{\sqrt{(1-b_{k})^{2} + (1-a_{k}-b_{k})^{2}}}
=\frac{\liminf_{k\to\infty}a_{k}}{\sqrt{1+(1-\liminf_{k\to\infty}a_{k})^{2}}}.
\]
\end{lem}

\begin{proof}
We note that 
the continuous function $f(x):= x(1+(1-x)^{2})^{-\frac{1}{2}}$ is monotone increasing on the interval $[0,2]$. 
In particular, we have
$\displaystyle \liminf_{k\to\infty} f(a_{k})
= f(\liminf_{k\to\infty} a_{k})$.
Hence, to prove our claim, it suffices to show
\begin{align}
\label{eq:liminf1}
\liminf_{k\to\infty} \frac{a_{k} + b_{k}}
{\sqrt{(1-b_{k})^{2} + (1-a_{k}-b_{k})^{2}}}
&= \liminf_{k\to\infty} f(a_{k}).
\end{align}

For this purpose, take an arbitrary $0<\epsilon<1$.
By $\lim_{k\to\infty} b_{k} =0$, we have 
$0\leq b_{k} \leq \epsilon(1+\epsilon)^{-1}(<1)$
for any sufficiently large integer $k$.
Then the inequalities
\begin{align}
\label{ineq:liminf-1}
f(a_{k}+b_{k}) \leq 
\frac{a_{k} + b_{k}}
{\sqrt{(1-b_{k})^{2} + (1-a_{k}-b_{k})^{2}}}
\leq (1+\epsilon)f(a_{k}+b_{k}).
\end{align}
holds.
Indeed, since $(1+\epsilon)^{-1}\leq 1-b_{k}\leq 1$, we have 
\begin{align*}
\frac{a_{k} + b_{k}}
{\sqrt{(1-b_{k})^{2} + (1-a_{k}-b_{k})^{2}}}
\geq \frac{a_{k} + b_{k}}
{\sqrt{1 + (1-a_{k}-b_{k})^{2}}} 
= f(a_{k}+b_{k}),
\end{align*}
and 
\begin{align*}
\frac{a_{k} + b_{k}}
{\sqrt{(1-b_{k})^{2} + (1-a_{k}-b_{k})^{2}}}
&\leq \frac{a_{k} + b_{k}}
{\sqrt{(1+\epsilon)^{-2} + (1+\epsilon)^{-2}(1-a_{k}-b_{k})^{2}}} \\
&= (1+\epsilon)f(a_{k}+b_{k}).
\end{align*}
Thus the inequality \eqref{ineq:liminf-1} holds.

Here we take $M>0$ such that $f(y)\leq f(x) + M(y-x)$ for any $0\leq x\leq y\leq 2$.
Then we have $f(a_{k}) \leq f(a_{k}+b_{k})\leq f(a_{k})+Mb_{k}$ since $0\leq a_{k}\leq a_{k}+b_{k}\leq 2$.
By combining this with the inequalities \eqref{ineq:liminf-1}, we have
\[
f(a_{k}) \leq 
\frac{a_{k} + b_{k}}
{\sqrt{(1-b_{k})^{2} + (1-a_{k}-b_{k})^{2}}}
\leq (1+\epsilon)(f(a_{k}) + Mb_{k}).
\]
Taking the limit as $k\to\infty$, we obtain
\[
\liminf_{k\to\infty} f(a_{k}) \leq 
\liminf_{k\to\infty} \frac{a_{k} + b_{k}}
{\sqrt{(1-b_{k})^{2} + (1-a_{k}-b_{k})^{2}}}
\leq (1+\epsilon)\liminf_{k\to\infty}f(a_{k}).
\]
Since $0<\epsilon<1$ is arbitrary,  
the equality \eqref{eq:liminf1} holds.
This proves our claim.
\end{proof}

\begin{rem}
\label{critical-value}
In Proposition \ref{non-sharp}, we did not treat the case where 
$c$ takes the critical value
$c(a_{-},a_{+},r,R):=A(2(1+(1-A)^{2}))^{-\frac{1}{2}}$.
At this critical value, the discontinuous group
$\Gamma_{\nu}(a_{-},a_{+},r,R)$ may be $(c,C)$-sharp for all $C\geq 0$,
and may not be $(c,C)$-sharp for all $C\geq 0$.
We give such examples below.

Let $\delta=\pm 1$ and $b\geq 0$.
We define the following quadruple $(a_{-},a_{+},r,R_{\delta})$:
\[
a_{-}(k):=e^{\frac{b}{2}(k-\frac{1}{4})+1},\ a_{+}(k):=e^{\frac{b}{2}(k+\frac{1}{4})+1},\  
r(k):=e^{-k},\ R_{\delta}(k):=(\log k)^{\delta}
\] 
for $k\geq 2$. This quadruple satisfies Assumptions \ref{ass:assume1}--\ref{ass:assume3} and the condition (\ref{R/r}).
Then the critical value $c\equiv c(a_{-},a_{+},r,R_{\delta})$
is $(2(b^{2}+(b+1)^{2}))^{-\frac{1}{2}}$ because 
\[
\log\frac{R_{\delta}(k)}{r(k)}=k+\delta\log\log k,\ 
\log \frac{a_{-}(k)a_{+}(k)}{r(k)}=(b+1)k+2.
\]
Take $\nu\in\mathbb{N}$ as in Proposition \ref{proper}. 
Let $\Gamma_{\nu,\delta}$ be 
the discontinuous group 
$\Gamma_{\nu}(a_{-},a_{+},r,R_{\delta})$ for $\mathrm{AdS}^{3}$.
Then, the following hold:
\begin{enumerate}[label=$(\arabic*)$]
\item
if $\delta=1$, then $\Gamma_{\nu,\delta}$ is $(c,0)$-sharp for $\nu\gg0$, 
and thus $(c,C)$-sharp for all $C\geq 0$;
\item
if $\delta=-1$, then $\Gamma_{\nu,\delta}$ is not $(c,C)$-sharp for all $C\geq 0$.
\end{enumerate}
Indeed, suppose $\delta=1$. Then
we may and do take $\nu\gg 0$ such that for all $k\geq\nu$,
\[
\frac{1}{b+1}\left(2\log\left(\frac{a_{-}(k)a_{+}(k)}{r(k)}\right) +\varepsilon(\nu)\right)
\leq 2\log\left(\frac{R_{\delta}(k)}{r(k)}\right)-\varepsilon(\nu).
\]
Then the inequality $(b+1)^{-1}\|j(\gamma)\|\leq \|j(\gamma)\|-\|\rho(\gamma)\|$
for any $\gamma\in F_{\nu}^{\infty}$ follows
from \ref{prop:diffofmu} and \ref{prop:upper-bound-j} of Proposition \ref{prop:key-ineq} 
by an argument similar to the proof of Proposition \ref{non-sharp} (2).
Hence $\|\rho(\gamma)\|\leq b(1+b)^{-1}\|j(\gamma)\|$
and thus (1) holds since 
\[
\frac{\|j(\gamma)\|-\|\rho(\gamma)\|}{\sqrt{2(\|j(\gamma)\|^{2}+\|\rho(\gamma)\|^{2})}} 
\geq \frac{1}{\sqrt{2(b^{2}+(1+b)^{2})}}=c.
\]

On the other hand, suppose $\delta=-1$.
Since $\varepsilon(\nu)<1$,
we have
$|\|\alpha_{k}\|-\|\beta_{k}\||\leq 2(k-\log\log k)+1$,
$\|\alpha_{k}\|\geq 2(b+1)k$, and $\|\beta_{k}\|\geq 2bk$
for any integer $k\geq\nu$ 
by (\ref{non-sharp-step1}), (\ref{non-sharp-step2}), and (\ref{non-sharp-step3}),
respectively.
Thus (2) follows readily from 
\[
\liminf_{k\to\infty}\left(\sqrt{2}c\sqrt{\|\alpha_{k}\|^{2}+
\|\beta_{k}\|^{2}}-|\|\alpha_{k}\|-\|\beta_{k}\||\right)
\geq \liminf_{k\to\infty}\left(2\log\log k-1\right)=\infty.
\]
\end{rem}

\begin{ex}
\label{ex-gamma-non-sharp}
The discontinuous groups $\Gamma_{\nu}(a_{-},a_{+},r,R)$ associated to the quadruples
$(a_{-},a_{+},r,R)$ in Table \ref{table-quadruple} are all non-sharp 
by Proposition \ref{non-sharp}.
\end{ex}

\section{Construction of \texorpdfstring{$\Gamma$}{} with large counting}
\label{countingbelowsec}
Let $G=\mathrm{SL}(2,\mathbb{R})\times \mathrm{SL}(2,\mathbb{R})$.
In this section, we explain the construction of a discontinuous group $\Gamma \subset G$ for which the asymptotic growth of the counting $N_{\Gamma}(x,R)$ is
as rapid as we wish, and thus complete the proof of Theorem \ref{countingbelow}.

\subsection{Construction of $\Gamma$ and a lower bound of 
$N_{\Gamma}(o,R)$}

To construct a discontinuous group satisfying the desired property,
we use the following lemma which is
a special case of Proposition \ref{Gamdisc}:

\begin{lem}
\label{lem-p(x)}
Let $p(t)$ be a $C^{2}$-function 
defined for sufficiently large $t$, say $t\geq \nu_{0}$,
such that $p(t)>0$, $p'(t)>0$, $p''(t)<0$ and $\lim_{t\to\infty}p(t)=\infty$,
and $q(t)$ a continuous function satisfying $\lim_{t\to\infty}q(t)=\infty$.
Let $\Gamma_{\nu}(p,q)$ denote the group $\Gamma_{\nu}(a_{-},a_{+}.r,R)$
associated to the sequences
\begin{align}
\label{quadruple-p(x)-q(x)}
a_{-}(k):=p(k),\ a_{+}(k):=p(k+\frac{1}{2}),\ r(k):=\frac{p'(k+1)}{p(k)q(k)},\ 
R(k):=\frac{p'(k+1)}{q(k)}
\end{align}
for $k\geq\nu_{0}$.
Then $\Gamma_{\nu}(p,q)$ is a discontinuous group for $\mathrm{AdS}^{3}$
if $\nu\gg 0$.
\end{lem}

\begin{proof}[Proof of Lemma \ref{lem-p(x)}]
Let us check that our $(a_{-},a_{+},r,R)$ satisfies Assumptions \ref{ass:assume1}--\ref{ass:assume3} and the condition \eqref{R/r}.
If we could check this, it follows from Proposition \ref{Gamdisc} that
$\Gamma_{\nu}(p,q)$ is a discontinuous group for $\mathrm{AdS}^{3}$
for sufficiently large $\nu\in\mathbb{N}$,
which proves our claim.

Since $\lim_{t\to\infty}q(t)=\infty$, 
we may assume $q(t)>4$ for any $t\geq \nu_{0}$.
We note that  
the derivative $p'(x)$ is monotone decreasing because 
$p''(x)<0$.
By the mean value theorem,
for any integer $k\geq\nu_{0}$, we have
\[
a_{+}(k)-a_{-}(k)=p(k+\frac{1}{2})-p(k)\geq \frac{p'(k+1)}{2}>2R(k),
\]
\[
a_{-}(k+1)-a_{+}(k)=p(k+1)-p(k+\frac{1}{2})
\geq \frac{p'(k+1)+p'(k+2)}{4}> R(k)+R(k+1).
\]
Thus Assumption \ref{ass:assume1} is verified.
Assumption \ref{ass:assume2} is obviously satisfied.

Take any $k,\ell\geq \nu_{0}$ and $\delta,\epsilon\in\{+,-\}$
such that $(k,\delta)\neq(\ell,\epsilon)$.
Since the function $p(x)$ is monotone increasing,
\begin{align*}
|a_{\delta}(k)-a_{\epsilon}(\ell)| 
&\geq \min\left\{p(k)-p(k-\frac{1}{2}),p(k+\frac{1}{2})-p(k),p(k+1)-p(k+\frac{1}{2})\right\} \\
&\geq\frac{p'(k+1)}{2},
\end{align*}
where the second inequality follows from the mean value theorem. 
Hence we get 
\[
\frac{R(k)}{|a_{\delta}(k)-a_{\epsilon}(\ell)|}
\leq \frac{2}{q(k)}
\]
and thus
Assumption \ref{ass:assume3} is verified since $\lim_{k\to\infty}q(k)=\infty$. 
The condition \eqref{R/r} is also satisfied since 
$\lim_{k\to\infty} R(k)/r(k) = \lim_{k\to\infty} p(k) = \infty$.
This completes the proof.
\end{proof}

The next proposition gives a lower bound of the counting $N_{\Gamma}(o,R)$ of the orbit through the base point $o$
of the discontinuous group $\Gamma=\Gamma_{\nu}(p,q)$.

\begin{prop}
\label{countingbelow-p(x)-q(x)}
Let $\Gamma_{\nu}\equiv \Gamma_{\nu}(p,q)$ be the discontinuous group
for $\mathrm{AdS}^{3}$ associated to a pair of functions $(p(t),q(t))$ 
as in Lemma \ref{lem-p(x)}.
Here we have taken $\nu\in\mathbb{N}$ as in Proposition \ref{proper}. 
Then   
\[
N_{\Gamma_{\nu}}(o,4\log p(t))\geq t-\nu \text{ for all } t\geq \nu.
\]
\end{prop}
\begin{proof}
Since we have taken $\nu\in\mathbb{N}$ as in Proposition \ref{proper},
we get $\log p(k)=\log (R(k)/r(k))>1$ for any $k\geq\nu$
and thus $p(k)\geq 2$.
We also recall from Definition \ref{def-gam} that the group 
$\Gamma_{\nu}$ is generated by 
$\{(\alpha_{k},\beta_{k})\mid k=\nu,\nu+1,\ldots\}$.

By the definitions (\ref{alpha-beta-def}) and (\ref{quadruple-p(x)-q(x)}), 
for $k\geq\nu$, we have
\begin{align*}
\alpha_{k}^{-1}\beta_{k}=
\begin{pmatrix}
p(k)^{-1} & p(k)^{2}-1 \\
0 & p(k)
\end{pmatrix},
\end{align*}
whence the pseudo-distance of 
$\alpha_{k}^{-1}\beta_{k}\in\mathrm{SL}(2,\mathbb{R})$
is computed by (\ref{def-pdist}):
\[
2\cosh \|\alpha_{k}^{-1}\beta_{k}\|
=p(k)^{4}-p(k)^{2}+1+p(k)^{-2} < p(k)^{4}.
\] 
We then observe for any $R>0$:
\[
\Gamma_{\nu}o \cap B(R)
\supset
\{\alpha_{k}^{-1}\beta_{k}
\mid \|\alpha_{k}^{-1}\beta_{k}\| \leq R\}
\supset
\{\alpha_{k}^{-1}\beta_{k}\mid p(k)^{4}\leq e^{R}\}
\]
since $2\cosh R > e^{R}$. Since $\Gamma_{\nu}$ acts freely on $\mathrm{AdS}^{3}$,
we deduce
\[
N_{\Gamma_{\nu}}(o,R)\geq 
\#\{k\in\mathbb{N}\mid \nu\leq k \text{ and } 4\log p(k) \leq R\}.
\]
Recall that $p(t)$ is monotone increasing. Hence we conclude 
$N_{\Gamma_{\nu}}(o,4\log p(t))\geq \#\{k\in\mathbb{N} \mid \nu\leq k\leq t\}>t-\nu$
for all $t\geq\nu$.
\end{proof}

\begin{ex}
\label{expexp}
Let $\Gamma_{\nu}\equiv\Gamma_{\nu}(a_{-},a_{+},r,R)$ be 
the discontinuous group for $\mathrm{AdS}^{3}$
associated to the quadruple $(a_{-},a_{+},r,R)$ in Table \ref{table-quadruple} (3).
Here we have taken $\nu\in \mathbb{N}$ as in Proposition \ref{proper}.
By an argument similar to the proof of Proposition \ref{countingbelow-p(x)-q(x)}, 
we have
\[N_{\Gamma_{\nu}}(o,R) \geq \exp(e^{\frac{R}{4}})- \nu
\text{ for all } R\geq 4\log\log\nu.\]
\end{ex}

\subsection{Proof of Theorem \ref{countingbelow}}
Let us prove Theorem \ref{countingbelow}.
We begin with a lemma which reduces the estimate of $N_{\Gamma}(x,R)$
to the case $x=o$.
\begin{lem}
\label{triangle}
Let $\Gamma$ be a discontinuous group for $\mathrm{AdS}^{3}$.
For $g=(\alpha,\beta)\in G$,
$x\in\mathrm{AdS}^{3}$, and $R>0$, we have
\[
N_{\Gamma}(gx,R-(\|\alpha\|+\|\beta\|))
\leq N_{g^{-1}\Gamma g}(x,R)\leq N_{\Gamma}(gx,R+(\|\alpha\|+\|\beta\|)).
\]
\end{lem}
\begin{proof}
By Lemma \ref{prop-pdist}, we have
$\|gx\|=\|\alpha x\beta^{-1}\|\leq \|x\| + (\|\alpha\|+\|\beta\|)$
and thus $gB(R) \subset B(R + (\|\alpha\|+\|\beta\|))$.
Similarly, we have $g^{-1}B(R-(\|\alpha\|+\|\beta\|))\subset B(R)$. Thus we obtain
\[
B(R-(\|\alpha\|+\|\beta\|))\subset gB(R) \subset B(R + (\|\alpha\|+\|\beta\|)).
\]
The assertion follows immediately from these inclusion relations.
\end{proof}

We also use the following elementary lemma.
\begin{lem}
\label{atarimae}
Given an increasing function $f \colon \mathbb{R} \to \mathbb{R}_{>0}$,
there exists a strictly increasing, convex, and $C^2$ function 
$F\colon \mathbb{R}_{\geq 0} \rightarrow \mathbb{R}_{>0}$ 
satisfying $F > f$ on $\mathbb{R}_{\geq 0}$ and $\lim_{x \to \infty}F(x) = \infty$.
\end{lem}
\begin{proof}[Proof of Lemma \ref{atarimae}]
We can construct such a function $F$ on $[0,n]$ by induction on $n\in\mathbb{N}$,
applying the following obvious claim to 
\[
(s,t,a,b,c) = 
\begin{cases}
(0,1,f(1),1,f(2)), & \\
(n,n+1,F(n),\lim_{x\to n-0}F'(x),f(n+2) + n) & (n\geq 1).
\end{cases}
\]

\textit{Claim}.
Given an interval $[s,t]$ and constants $a,b,c > 0$,
there exists a convex, strictly increasing, and $C^2$ function 
$h\colon[s,t]\rightarrow\mathbb{R}_{>0}$ satisfying
\[
h(s)=a,\ \lim_{x \to s+0}h'(x)=b,\ h(t)\geq c,\ \lim_{x\to t-0}h'(x) < \infty.
\]
\end{proof}

We are ready to prove Theorem \ref{countingbelow}.
\begin{proof}[Proof of Theorem \ref{countingbelow}]
Since the $G$-action on $\mathrm{AdS}^{3}$ is transitive,
we may and do assume $x=o$ by Lemma \ref{triangle}.
Moreover, by Lemma \ref{atarimae}, it suffices to consider the case where $f(t)$ is
a $C^{2}$-function such that $f(t)>0$, $f'(t)>0$, 
$f''(t)>0$ for any $t>0$ and
$\lim_{t\to\infty}f(t)=\infty$.

Let $p(t)$ be the inverse function of $sf(s)$
defined for $t\gg 0$.
Then $p(t)$ is a $C^{2}$-function 
such that $p(t)>0$, $p'(t)>0$, $p''(t)<0$ for $t\gg0$, 
and $\lim_{t\to\infty}p(t)=\infty$.
Take an arbitrary continuous function $q(t)$ satisfying $\lim_{t\to\infty}q(t)=\infty$.
Then
$\Gamma_{\nu}(p,q)$ is a discontinuous group 
for $\mathrm{AdS}^{3}$ if $\nu\gg 0$ by Lemma \ref{lem-p(x)}.
Take $\nu\in\mathbb{N}$ as in Proposition \ref{proper} and 
set $\Gamma\equiv\Gamma_{f,o}:=\Gamma_{\nu}(p,q)$.
By Proposition \ref{countingbelow-p(x)-q(x)}, 
$N_{\Gamma}(o,R)\geq e^{R/4}f(e^{R/4})-\nu$ for all $R\geq 4\log p(\nu)$. 
Thus $\lim_{R \to \infty}N_{\Gamma}(o,R)/f(R) = \infty$
and the proof of Theorem \ref{countingbelow} is completed.
\end{proof}


\section{Application to the spectral analysis}
\label{dspec}

In this section, we complete the proofs of 
Theorems \ref{countingabove-intro} and \ref{non-sharp-spectrum-intro}.

Associated to a quadruple $(a_{-},a_{+},r,R)$ of sequences,
we have defined in Section \ref{Gam} the subgroup 
$\Gamma_{\nu}\equiv\Gamma_{\nu}(a_{-},a_{+},r,R)$ 
of $G=\mathrm{SL}(2,\mathbb{R})\times \mathrm{SL}(2,\mathbb{R})$,
and proved that $\Gamma_{\nu}$ is a discontinuous group
for $X=\mathrm{AdS}^{3}$ when $\nu\in\mathbb{N}$ is sufficiently large
if Assumptions \ref{ass:assume1}--\ref{ass:assume3} and the condition (\ref{R/r}) are satisfied.
Then the quotient space
$X_{\Gamma_{\nu}}:=\Gamma_{\nu}\backslash X$
admits an anti-de Sitter structure via the covering $X\rightarrow X_{\Gamma_{\nu}}$.
Let $\square_{X_{\Gamma_{\nu}}}$ be the Laplacian on $X_{\Gamma_{\nu}}$,
which is not an elliptic operator, but a hyperbolic operator 
because $X_{\Gamma_{\nu}}$ is a Lorentzian manifold.
Our interest here is the discrete spectrum 
$\mathrm{Spec}_{d}(\square_{X_{\Gamma_{\nu}}})$ of 
the Laplacian $\square_{X_{\Gamma_{\nu}}}$.

In this section, we give a sufficient condition on 
$(a_{-},a_{+},r,R)$ for the exponential growth condition (\ref{count})
of the counting $N_{\Gamma_{\nu}}(x,R)$.
By the criterion of sharpness in Section \ref{sharpsec}, we give non-sharp 
$\Gamma_{\nu}$
such that the counting $N_{\Gamma_{\nu}}(x,R)$ has at most an exponential growth
uniformly on $x\in X$.
Moreover, we use this counting result to 
construct an infinite subset of  
$\mathrm{Spec}_{d}(\square_{X_{\Gamma_{\nu}}})$ for such $\Gamma_{\nu}$.
We show:
\begin{thm}
\label{countingabove}
Let $(a_{-},a_{+},r,R)$ be a quadruple of sequences satisfying Assumptions 
\ref{ass:assume1}-\ref{ass:assume3}, and 
$\nu\in\mathbb{N}$ as in Proposition \ref{proper}.
If there exist $A,a>0$ such that  
$\nu\geq 1+(1-\log A)/a$ and $R(k)/r(k)\geq Ae^{ak}$
for any integer $k\geq\nu$, then the following claims hold:
\begin{enumerate}[label=$(\arabic*)$]
\item \label{item:countingabove}
$N_{\Gamma_{\nu}}(x,R) \leq 2^{2R/a}$
for any $x \in X$ and any $R > 0$;
\item \label{item:disc-spec}
there exists $m_{0}=m_{0}(\Gamma_{\nu})>0$ such that
\[
\mathrm{Spec}_{d}(\square_{X_{\Gamma_{\nu}}})\supset\{4m(m-1)\mid
m\in\mathbb{Z}\text{ and } m> m_{0}\}.
\]
\end{enumerate}
\end{thm}

\begin{ex}
\label{ex-countingabove}
The quadruple $(a_{-},a_{+},r,R)$ in Table \ref{table-quadruple} (1) apply to Theorem \ref{countingabove}.
\end{ex}

Postponing the proof of Theorem \ref{countingabove}, 
we give proofs of Theorems \ref{countingabove-intro} and \ref{non-sharp-spectrum-intro}.
\begin{proof}[Proof of Theorems \ref{countingabove-intro} 
and \ref{non-sharp-spectrum-intro}]
The discontinuous group $\Gamma_{\nu}(a_{-},a_{+},r,R)$
associated to Example \ref{ex-countingabove} 
is non-sharp by Proposition \ref{non-sharp}.
Applying \ref{item:countingabove} and \ref{item:disc-spec} of Theorem \ref{countingabove}, 
we get Theorems \ref{countingabove-intro} and \ref{non-sharp-spectrum-intro},
respectively.
\end{proof}

To prove Theorem \ref{countingabove} \ref{item:disc-spec}, 
namely, to construct an infinite subset of the discrete spectrum, 
we use the following fact, established by Kassel-Kobayashi \cite{KaKob16},
and Theorem \ref{countingabove} \ref{item:countingabove} imply Theorem \ref{countingabove} \ref{item:countingabove} immediately:

\begin{fact}
[{\cite[Thm.\ 3.8 (1) and its proof]{KaKob16}}]
\label{KaKo}
Let $\Gamma$ be a discontinuous group for $X=\mathrm{AdS}^{3}$
satisfying the exponential growth condition (\ref{count}).
Then there exists $m_{0}(\Gamma)>0$ such that 
\[
\mathrm{Spec}_{d}(\square_{X_{\Gamma}})\supset\{4m(m-1)\mid m\in\mathbb{Z}
\text{ and } m> m_{0}(\Gamma)\}.
\] 
\end{fact}

\begin{rem}
Kassel-Kobayashi constructed in \cite[Cor.\ 9.10]{KaKob16}
an infinite subset of the discrete spectrum $\mathrm{Spec}_{d}(\square_{X_{\Gamma}})$
which is stable under small deformations of a sharp discontinuous group $\Gamma$ in $G$. Then does there exist a stable discrete spectrum under small deformations of $\Gamma_{\nu}(a_{-},a_{+},r,R)$?
Since $\Gamma_{\nu}(a_{-},a_{+},r,R)$ is infinitely generated, 
we need to treat deformation carefully.

A small deformation of $\Gamma_{\nu}(a_{-},a_{+},r,R)$ in $G$ is 
neither injective nor discrete in general.
Therefore, there exists no stable discrete spectrum in the sense of 
Kassel-Kobayashi \cite{KaKob16}.
However, the following question looks reasonable:
does there exist an infinite subset of the discrete spectrum which 
is stable under small deformations of $\Gamma_{\nu}(a_{-},a_{+},r,R)$
which act on $\mathrm{AdS}^{3}$ properly discontinuously?

\end{rem}

We prepare some results needed for the proof of Theorem \ref{countingabove} \ref{item:countingabove}.
The following lemma was proved in \cite{KaKob16} in the general setting where
$X$ is a reductive symmetric space.
Since it plays a crucial role in proving Theorem \ref{countingabove},
we give an elementary proof for $X=\mathrm{AdS}^{3}$
for the convenience of the reader.

\begin{lem}
[{\cite[Lem.\ 4.4 and 4.17]{KaKob16}}]
\label{ineqofnu}
For $(\alpha,\beta) \in G$ and $x \in X=\mathrm{AdS}^{3}$, 
\[\|(\alpha,\beta)x\| + \|x\| \geq \left| \|\alpha\|-\|\beta\| \right|.\]
\end{lem}

\begin{proof}
By Lemma \ref{prop-pdist}, we have
\begin{align}
\label{ineqofnu-step1}
\|(\alpha,\beta)x\| &=\|\alpha x\beta^{-1}\|\geq\left| \|\alpha x\| - \|\beta\| \right|, \\
\label{ineqofnu-step2}
\|x\|&=\|\alpha^{-1}\alpha x\|\geq |\|\alpha\|-\|\alpha x\||.
\end{align}
Summing up (\ref{ineqofnu-step1}) and (\ref{ineqofnu-step2}),
we have 
$\|(\alpha,\beta)x\| + \|x\|\geq \left|\|\alpha\|-\|\beta\|\right|$.
\end{proof}

\begin{fact}
[{\cite[Def--Lem.\ 4.20]{KaKob16}}]
\label{Dirichlet}

Let $\Gamma$ be a discontinuous group for $X=\mathrm{AdS}^{3}$. Then, 
the set
\[\mathcal{D}_{X_{\Gamma}} := \{x \in X
\mid \forall \gamma \in \Gamma,\ \|\gamma x\| \geq \|x\| \}.\]
is a fundamental domain of $X$ for the action of $\Gamma$.
In particular, $\Gamma \mathcal{D}_{X_{\Gamma}} = X$.
\end{fact}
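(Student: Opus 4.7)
The plan is to verify the two properties defining $\mathcal{D} := \mathcal{D}_{\Gamma\backslash\AdS}$ as a fundamental domain: every $\Gamma$-orbit meets $\mathcal{D}$, and distinct $\Gamma$-translates of $\mathcal{D}$ overlap only on a measure-zero set. This is the standard Dirichlet-domain construction, with the key inputs being proper discontinuity of the $\Gamma$-action together with the compactness of pseudo-balls $B(R)$.

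For the surjectivity $\Gamma\mathcal{D} = \AdS$, I fix $x \in \AdS$ and show that the function $\gamma \mapsto \|\gamma x\|$ on $\Gamma$ attains its infimum. Set $R_0 := \|x\| + 1$ and let $S := B(R_0)$, which is compact. If $\gamma \in \Gamma$ satisfies $\|\gamma x\| \leq R_0$, then both $x$ and $\gamma x$ lie in $S$, so $\gamma S \cap S \supseteq \{\gamma x\} \neq \emptyset$; proper discontinuity therefore forces $\{\gamma \in \Gamma \mid \|\gamma x\| \leq R_0\}$ to be finite. Since $e$ already lies in this nonempty finite set, the infimum is attained at some $\gamma_0 \in \Gamma$. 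Then for every $\gamma \in \Gamma$, $\|\gamma(\gamma_0 x)\| = \|(\gamma\gamma_0)x\| \geq \|\gamma_0 x\|$, so $\gamma_0 x \in \mathcal{D}$ and $x \in \Gamma \mathcal{D}$.

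For the essentially-disjoint property, let $\gamma \in \Gamma \setminus \{e\}$ and suppose both $x$ and $\gamma x$ lie in $\mathcal{D}$. Applying the defining inequality at $x$ to $\gamma$ gives $\|\gamma x\| \geq \|x\|$, and applying it at $\gamma x$ to $\gamma^{-1}$ gives $\|x\| = \|\gamma^{-1}(\gamma x)\| \geq \|\gamma x\|$; hence $\|\gamma x\| = \|x\|$. Thus the overlap $\mathcal{D}\cap \gamma^{-1}\mathcal{D}$ is contained in the bisector $H_\gamma := \{x \in \AdS : \|\gamma x\| = \|x\|\}$. By Fact~\ref{hypdist}, $2\cosh\|y\|$ equals the polynomial $\operatorname{Tr}({}^tyy)$ in the matrix entries of $y \in \SL$, so $H_\gamma$ is a closed real-algebraic subvariety of $\AdS$. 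I claim it is a proper subvariety: otherwise $\|\gamma^n x\| = \|x\|$ would hold for all $n \in \Z$ and all $x \in \AdS$, placing the full cyclic orbit $\{\gamma^n x\}_{n \in \Z}$ inside the compact set $B(\|x\|)$, and proper discontinuity (applied to $B(\|x\|)$) would then make $\langle\gamma\rangle$ a finite subgroup of $\Gamma$, which is excluded for the torsion-free groups $\Gam$ central to this paper. Each such proper $H_\gamma$ has Lebesgue measure zero, so $\bigcup_{\gamma \neq e} H_\gamma$ is null and the $\Gamma$-translates of $\mathcal{D}$ essentially partition $\AdS$. The main obstacle in the whole plan is exactly this last step—ruling out that any $\gamma \neq e$ preserves $\|\cdot\|$ globally on $\AdS$; for the free, hence torsion-free, groups that are the paper's focus, the cyclic-subgroup argument above closes the gap cleanly.
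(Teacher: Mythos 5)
First, a point of reference: the paper offers no proof of this statement at all --- it is quoted verbatim from \cite[Def-Lem.~4.20]{KaKob16} --- so your argument can only be measured against the standard Dirichlet-domain construction that the citation points to. Your first half is correct and is exactly that construction: for fixed $x$, the set $\{\gamma\in\Gamma \mid \|\gamma x\|\leq\|x\|+1\}$ is contained in $\Gamma_{B(\|x\|+1)}$, hence finite by proper discontinuity together with the compactness of pseudo-balls; the infimum of $\gamma\mapsto\|\gamma x\|$ is therefore attained at some $\gamma_0$, and $\gamma_0x\in\mathcal{D}_{\Gamma\backslash\AdS}$. This cleanly establishes $\Gamma\,\mathcal{D}_{\Gamma\backslash\AdS}=\AdS$, which is the part of the Fact that the paper explicitly isolates (``in particular'') and uses for Lemma~\ref{uniformineq} and for reducing the study of $N_{\Gamma}(x,R)$ to $x\in\mathcal{D}_{\Gamma\backslash\AdS}$.

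The essential-disjointness half has a genuine gap relative to the statement as written, which concerns an \emph{arbitrary} discontinuous group. Your key claim --- that $H_{\gamma}=\{x\mid\|\gamma x\|=\|x\|\}$ is a proper subvariety for every $\gamma\neq e$ --- fails in general: every $\gamma=(\gamma_1,\gamma_2)\in{\rm SO_{2}}(\R)\times{\rm SO_{2}}(\R)$ satisfies $\|\gamma_1x\gamma_2^{-1}\|=\dH(\gamma_1\cdot\pi(x),\sqrt{-1})=\dH(\pi(x),\sqrt{-1})=\|x\|$ identically, and such elements can generate discontinuous groups in the paper's sense. Concretely, $\gamma=(-E,E)$ acts by $x\mapsto-x$, which is fixed-point free on $\SL$ and generates a group of order two, hence acts freely and properly discontinuously; for this $\Gamma$ one has $\mathcal{D}_{\Gamma\backslash\AdS}=\AdS=\gamma\,\mathcal{D}_{\Gamma\backslash\AdS}$, so the translates are not essentially disjoint and no bisector argument can repair this. (The cited Def-Lem.\ must accordingly be read with a weaker, bounded-multiplicity notion of fundamental domain, or with such torsion elements excluded.) Your own fix via torsion-freeness does close the gap for the free groups $\Gam$, and there the argument is sound: if $\|\gamma x\|=\|x\|$ for all $x$, then $\{\gamma^{n}\}_{n\in\Z}\subset\Gamma_{B(\|x\|)}$ is finite, contradicting infinite order, and a real-analytic function not identically zero on the connected manifold $\SL$ has a zero set of measure zero. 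But as a proof of the Fact in its stated generality --- the generality in which the paper invokes $L^{p}(\mathcal{D}_{\locAdS})\cong L^{p}(\locAdS)$ in the proof of Fact~\ref{KaKo}~(1) --- the second half is incomplete, and you should either restrict the statement to torsion-free $\Gamma$ or prove the weaker covering-with-bounded-overlap property that the $L^{p}$ comparison actually requires.
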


Therefore, we may assume $x\in\mathcal{D}_{X_{\Gamma}}$ to study $N_{\Gamma}(x,R)$.

\begin{lem}
[cf. {\cite[Lem.\ 4.21]{KaKob16}}]
\label{uniformineq}
For any $x \in \mathcal{D}_{X_{\Gamma}}$ 
and any $\gamma = (\alpha,\beta) \in \Gamma$, 
\[\|\gamma x\| \geq \frac{1}{2}\left|\|\alpha\| - \|\beta\| \right|.\]
\end{lem}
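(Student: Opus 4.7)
The plan is to combine Lemma \ref{ineqofnu} with the defining property of the fundamental domain $\mathcal{D}_{\Gamma \backslash \AdS}$. Lemma \ref{ineqofnu} supplies the two-sided triangle-type estimate
\[
\|\gamma x\| \geq \bigl | \|\gamma_1\| - \|\gamma_2\| \bigr | - \|x\|,
\]
which is the right shape but has the ``wrong'' correction term $\|x\|$ on the right-hand side. To absorb this correction into the left-hand side, I would exploit the fact that every $x \in \mathcal{D}_{\Gamma \backslash \AdS}$ is, by definition, a point of minimal pseudo-norm in its $\Gamma$-orbit; applied to the given $\gamma$, this gives the single inequality $\|x\| \leq \|\gamma x\|$.

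Substituting $\|x\| \leq \|\gamma x\|$ into Lemma \ref{ineqofnu} yields
\[
\|\gamma x\| \geq \bigl | \|\gamma_1\| - \|\gamma_2\| \bigr | - \|\gamma x\|,
\]
and rearranging gives the desired $\|\gamma x\| \geq \tfrac{1}{2}\bigl | \|\gamma_1\| - \|\gamma_2\| \bigr |$. There is really no obstacle here; the whole content of the lemma is the observation that the additive defect $\|x\|$ in Lemma \ref{ineqofnu} is controlled by $\|\gamma x\|$ itself whenever $x$ lies in the Dirichlet-type fundamental domain of Fact \ref{Dirichlet}, after which a one-line rearrangement closes the argument. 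The point of the lemma, of course, is that the resulting bound is uniform in $x \in \mathcal{D}_{\Gamma \backslash \AdS}$, so it will be usable in the subsequent orbit-counting estimate $N_{\Gam}(x,R) \leq 2^{2R+16}$.
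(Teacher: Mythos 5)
Your proof is correct and uses exactly the same two ingredients as the paper's (Lemma \ref{ineqofnu} and the defining inequality $\|\gamma x\| \geq \|x\|$ of $\mathcal{D}_{\Gamma \backslash \AdS}$); the only difference is that you substitute and rearrange directly, whereas the paper splits into the cases $\|x\| \geq \tfrac{1}{2}\bigl|\|\gamma_1\|-\|\gamma_2\|\bigr|$ and $\|x\| < \tfrac{1}{2}\bigl|\|\gamma_1\|-\|\gamma_2\|\bigr|$ — a purely cosmetic distinction.
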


\begin{proof}
Let $\gamma=(\alpha,\beta) \in \Gamma$ and $x \in \mathcal{D}_{X_{\Gamma}}$.
Then we have 
\[2\|\gamma x\| \geq \|\gamma x\| + \|x\|
\geq \left|\|\alpha\|-\|\beta\| \right|\]
by the definition of $\mathcal{D}_{X_{\Gamma}}$ and Lemma \ref{ineqofnu},
and thus Lemma \ref{uniformineq} holds.
\end{proof}

\begin{lem}
\label{sum}
We set
$\displaystyle
S(R):=\bigcup_{m = 1}^{\infty} \left\{ (k_1,\ldots,k_m) \in \mathbb{N}_{+}^{m} \middle|
\sum_{i = 1}^{m} k_i \leq R \right\}$ for any $R\in\mathbb{N}$.
Then the cardinality of $S(R)$ equals $2^{R}-1$.
\end{lem}

\begin{proof}
For $(k_{1},\ldots,k_{m})\in S(R)$,
we define the binary number 
$1\underbrace{0\ldots0}_{k_1-1}\ldots1\underbrace{0\ldots0}_{k_m-1}$, 
which induces a bijection $S(R)\xrightarrow{\sim} \mathbb{Z}\cap [1,2^{R}-1]$.
Hence $\# S(R)=2^{R}-1$.
\end{proof}

We are ready to prove Theorem \ref{countingabove}.
\begin{proof}[Proof of Theorem \ref{countingabove}]
We now prove \ref{item:countingabove}. 
Let $\Gamma_{\nu}\equiv\Gamma_{\nu}(a_{-},a_{+},r,R)$.
Take $\nu\in\mathbb{N}$ as in Proposition \ref{proper} and assume $\nu\geq 1+(1-\log A)/a$.
Then recall $\varepsilon(\nu)(<1)\leq 2$.
Moreover, take $\gamma\in F^{\infty}_{\nu}\smallsetminus\{e\}$.
Let $m:=\ell(\gamma)$ be the word length of $\gamma$, and 
we write $\gamma=\gamma_{k_1}^{s_1}\cdots \gamma_{k_m}^{s_m}$ for 
the reduced expression where $s_1,\ldots,s_{m} \in\{1,-1\}$ and 
$k_1,\ldots,k_{m} \geq \nu(\geq 2)$.
By Proposition \ref{prop:key-ineq} \ref{prop:diffofmu},
\begin{align}
\label{diffofmu-example}
\frac{1}{2}(\| j(\gamma) \| - \| \rho(\gamma) \|)
&\geq\sum_{i=1}^{m}\left(\log\left(\frac{R(k_{i})}{r(k_{i})}\right)-\frac{1}{2}\varepsilon(\nu)\right)
\nonumber \\
&\geq
\sum_{i=1}^{m} (\log A + ak_i-1) \nonumber \\
&=a\sum_{i=1}^{m}\left( k_{i} - \frac{1-\log A}{a}\right) \nonumber \\
&\geq a\sum_{i=1}^{m}\left( k_{i} - (\nu-1)\right).
\end{align}

To prove \ref{item:countingabove}, we may and do assume 
$x \in \mathcal{D}_{X_{\Gamma_{\nu}}}$ by Fact \ref{Dirichlet}.
Suppose $(j(\gamma),\rho(\gamma))x \in B(R)$.
Then $\left|\|j(\gamma)\| - \|\rho(\gamma)\| \right|/2\leq R$ 
by Lemma \ref{uniformineq}.
Hence $(k_1-(\nu-1),\ldots,k_m-(\nu-1))\in S(\lfloor R/a\rfloor)$ by the inequality (\ref{diffofmu-example}),
where $\lfloor R\rfloor$ is the largest integer less than or equal to $R$ for $R\in\mathbb{R}$, and in particular, $m\leq R/a$.
The number of such $(k_1,\ldots,k_m)$
is at most $2^{R/a} - 1$ by Lemma~\ref{sum}
and thus the number of such
$\gamma = \gamma_{k_1}^{s_1}\cdots \gamma_{k_m}^{s_m}(\neq e)$ with 
$s_1,\ldots,s_{m} \in\{1,-1\}$ is at most $(2^{R/a}-1)2^{R/a}$.
Hence we obtain $N_{\Gamma_{\nu}}(x,R) \leq (2^{R/a} - 1)2^{R/a} + 1 \leq 2^{2R/a}$,
which proves \ref{item:countingabove}. The assertion \ref{item:disc-spec} follows from \ref{item:countingabove} and Fact \ref{KaKo}.
\end{proof}

\appendix
\section{Proof of the formula (\ref{riemann})}

The formula (\ref{riemann}) is probably well known, but 
we give a proof for the reader's convenience.
It suffices to show the following proposition:
\begin{prop}
Let $\Gamma$ be a discrete group of isometries of a complete Riemannian manifold $X$.
We write $B(x; R)$ for the ball of radius $R$ centered at $x\in X$. 
Fix a point $x_{0}\in X$. Then,
\[
\forall x\in X,\ \exists c>0,\ \#(\Gamma x\cap B(x_{0}; R))\leq
\frac{\mathrm{vol}(B(x_{0}; R+c))}{\mathrm{vol}(B(x; c))}.
\]
\end{prop}

\begin{proof}
Let $x\in X$. We denote by $d_{X}(\cdot,\cdot)$ the distance of $X$,
and set $3c:=\inf \{d_{X}(x,y)\mid y\in \Gamma x\smallsetminus\{x\}\}$. 
By \cite[Ch.\ IV, Thm.\ 2.2]{Helgason_symmetric},
the orbit $\Gamma x$ of the discrete group $\Gamma$ is discrete, hence
we have $c>0$. By the triangle inequality, the inclusion relation
\begin{align}
\label{ineq:inclusion_riemann}
\bigcup_{y\in \Gamma x \cap B(x_{0}; R)} B(y; c) \subset B(x_{0}; R+c)
\end{align}
holds. We note that the left hand side of (\ref{ineq:inclusion_riemann}) is a disjoint union by the definition of $c$. Since $\mathrm{vol}(B(y; c))=\mathrm{vol}(B(x; c))$ for any $y\in \Gamma x$, we obtain the desired inequality taking the volumes of the both hand sides of (\ref{ineq:inclusion_riemann}).
\end{proof}

\section*{Acknowledgements}
The author would like to express his sincere gratitude 
to Professor Toshiyuki Kobayashi for his support and encouragement. 
He would also like to show his appreciation to Dr.\ Yosuke Morita for his helpful comments. 
Thanks are also due to an anonymous referee 
for comments and suggestions that improved this paper.
This work was supported  by JSPS KAKENHI Grant Number 18J20157 
and the Program for Leading Graduate Schools, MEXT, Japan.

\end{document}